\documentclass[12pt]{amsart}
\usepackage{amsmath,amssymb,amsfonts,amsthm,color}
\usepackage[all,cmtip]{xy}
\usepackage[top=2in, bottom=2in, left=1.5in, right=1.5in]{geometry}

\linespread{1.2}

\theoremstyle{plain}
\newtheorem{theorem}{Theorem}[section]
\newtheorem{corollary}[theorem]{Corollary}
\newtheorem{lemma}[theorem]{Lemma}
\newtheorem{proposition}[theorem]{Proposition}

\theoremstyle{definition}
\newtheorem{definition}[theorem]{Definition}
\newtheorem{example}[theorem]{Example}

\theoremstyle{remark}
\newtheorem{remark}[theorem]{Remark}

\DeclareMathOperator{\FS}{FS}

\newcommand\myseq[1]{#1_1,\allowbreak #1_2,\allowbreak\dots}
\newcommand\bbN{\mathbb{N}}
\newcommand{\set}[2]{\{\, #1 : #2\,\}}
\newcommand{\G}[1]{{\textcolor[rgb]{0.00,0.59,0.00}{#1}}}

\title[Partition regularity of IP sets]{Partition regularity of infinite parallelepiped sets}
\author{Yonatan Gadot and Boaz Tsaban}
\address{Department of Mathematics, Bar-Ilan University, Ramat Gan, Israel}
\email{tsaban@math.biu.ac.il, yogadot710@gmail.com}

\begin{document}

\begin{abstract}
A proper \emph{infinite parallelepiped (IP)} set in a semigroup is an infinite set consisting of a sequence $\myseq{a}$ and its finite sums, or a superset of such a set.
Hindman's theorem asserts that the proper IP sets of natural numbers are partition regular: for each finite coloring of a proper IP set of natural numbers there is a monochromatic proper IP subset.
Furstenberg generalized this question to arbitrary semigroups, in which the analogous result does not hold in general. We provide a complete classification of the semigroups for which the proper IP sets are partition regular, and show that this property is equivalent to other fundamental notions of additive Ramsey theory.
\end{abstract}

\subjclass[2020]{
05D10, 
20M10} 

\keywords{Furstenberg IP sets, Hindman's theorem, Finite Sums Theorem, Ramsey Theory}

\maketitle

\section{Introduction}

Ramsey theory deals with the phenomenon that, whenever a rich mathematical structure is partitioned into finitely many parts,
at least one of the parts is mathematically rich.
A family of sets is \emph{partition regular} if, for each finite partition of a set from the family,
there is a part that is also in this family.
A \emph{finite coloring} of a set is a function from that set into a finite set of colors.
A subset of the colored set is \emph{monochromatic} if all of its members have the same color.
Thus, a family of sets is partition regular if and only if for each finite coloring of a set in the family,
there is in the family a monochromatic subset of the colored set.

For clarity and compatibility of notation, we use additive notation for all semigroups,
\emph{including noncommutative ones}.
Thus, the results of this paper hold for both commutative and noncommutative semigroups.
For a sequence $\myseq{a}$ in a semigroup, let
\[
\FS(\myseq{a}) := \set{a_{i_{1}}+\dotsb+a_{i_m}}{m\ge 1, i_1< \dotsb<i_m}.
\]
A subset of a semigroup is an \emph{infinite parallelepiped (IP)} set if it contains a set of the form $\FS(\myseq{a})$, for some
sequence $\myseq{a}$ in the semigroup. For convenience, we do not require IP sets to be of the form
$\FS(\myseq{a})$; it suffices for them to contain a set of this form.
Hindman's celebrated Finite Sums Theorem~\cite[Theorem~3.1]{Hind} asserts that
for each finite coloring of the natural numbers, there is a  monochromatic IP set.
Using the Stone-{\v{C}}ech compactification, Galvin and Glazer provided an elegant proof
of Hindman's theorem, which applies to all semigroups~\cite[Theorem~10.3]{GG}.
Furstenberg~\cite[Proposition~8.13]{Furst} attributes the following stronger assertion to Hindman,
who proved this assertion for $\bbN$, and credited this result to Galvin~\cite[Corollary~2.9]{Hindm}.

\begin{theorem}[Furstenberg--Galvin--Hindman]
\label{thm:FGH}
For each semigroup, the family of IP sets is partition regular.
\end{theorem}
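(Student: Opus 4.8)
The plan is to run the Galvin--Glazer argument in the Stone--\v{C}ech compactification $\beta S$ of the semigroup $S$; since that argument never invokes commutativity, it will apply to arbitrary $S$. First I would reduce to a single witnessing sequence: the given IP set $A$ contains some $\FS(\myseq a)$, and any finite coloring of $A$ restricts to a finite coloring of $\FS(\myseq a)$, so it suffices to produce a monochromatic IP subset of $\FS(\myseq a)$. Recall that $\beta S$, the space of ultrafilters on $S$, carries an operation extending that of $S$, specified by $B \in p+q \iff \set{x\in S}{-x+B\in q}\in p$, where $-x+B:=\set{y\in S}{x+y\in B}$; with this operation $\beta S$ is a compact right-topological semigroup, and I will use Ellis's theorem that every such semigroup contains an idempotent.

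Next I would localize to the sequence $\myseq a$ by setting $T:=\bigcap_{m\ge 1}\overline{\FS(a_m,a_{m+1},\dots)}$, the intersection over all tails of the closures of their finite-sum sets in $\beta S$. As a decreasing intersection of nonempty closed sets in a compact space, $T$ is nonempty and closed, and a direct computation shows it is a subsemigroup: if $p,q\in T$ and $x=a_{i_1}+\dotsb+a_{i_k}$ is a finite sum from the $m$-th tail, then $\FS(a_{i_k+1},a_{i_k+2},\dots)\subseteq -x+\FS(a_m,a_{m+1},\dots)$, which places $-x+\FS(a_m,\dots)$ in $q$ and hence each tail $\FS$-set in $p+q$. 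Applying Ellis's theorem inside $T$ produces an idempotent $p=p+p$ with $p\in T$; since $T\subseteq\overline{\FS(\myseq a)}$, we get $\FS(\myseq a)\in p$.

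The core is the Galvin--Glazer lemma: every member of an idempotent ultrafilter is an IP set. Given $B\in p$, put $B^\star:=\set{x\in B}{-x+B\in p}$. Idempotency yields $B^\star\in p$ and, more delicately, $-x+B^\star\in p$ for every $x\in B^\star$. One then constructs a sequence $\myseq b$ recursively: having chosen $b_1,\dots,b_n$, the set $B^\star\cap\bigcap\set{-s+B^\star}{s\in\FS(b_1,\dots,b_n)}$ is a finite intersection of members of $p$, so it is nonempty, and any of its elements serves as $b_{n+1}$. The defining property of $B^\star$ then forces $\FS(\myseq b)\subseteq B$, so $B$ is an IP set.

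To conclude, color $\FS(\myseq a)$ with finitely many colors. Since $\FS(\myseq a)\in p$ and $p$ is an ultrafilter, exactly one color class $C$ lies in $p$, and the lemma makes $C$ an IP set. I expect the two technical points to be (i) verifying that $T$ is closed under the extended operation, and (ii) the passage $x\in B^\star\Rightarrow -x+B^\star\in p$ in the lemma; once Ellis's theorem and the right-topological structure are in hand, both are routine and, crucially, insensitive to the order of addition, so the proof works verbatim in every semigroup.
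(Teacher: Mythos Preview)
Your argument is a correct outline of the standard Galvin--Glazer proof via idempotent ultrafilters in $\beta S$; the reduction to $\FS(\myseq a)$, the verification that $T$ is a closed subsemigroup, the application of Ellis's theorem, and the $B^\star$ construction are all sound (one minor gloss: the inductive step requires maintaining the invariant $\FS(b_1,\dots,b_n)\subseteq B^\star$, which follows from your observation that $x\in B^\star\Rightarrow -x+B^\star\in p$, but is worth stating explicitly).

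However, there is nothing to compare against: the paper does not supply its own proof of Theorem~\ref{thm:FGH}. It is stated as a background result, attributed to Furstenberg, Galvin, and Hindman, with the remark that ``Using the Stone--\v{C}ech compactification, Galvin and Glazer provided an elegant proof of Hindman's Theorem, that applies to all semigroups.'' The paper later invokes the closely related Theorem~\ref{thm:HS} (Hindman--Strauss), again without proof, noting only that the proof of Theorem~\ref{thm:FGH} ``actually establishes'' it. So your proposal is precisely the argument the paper \emph{cites}, not one it \emph{gives}; you have filled in what the authors deliberately left to the references.
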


In the semigroup $\bbN$ of natural numbers, every IP set contains a set $\FS(\myseq{a})$ where the
sequence $\myseq{a}$ is \emph{injective}.
Thus, in $\bbN$, the term ``infinite parallelepiped'' is justified.
However, as defined above, an IP set in a general semigroup may be a singleton! Indeed, for each idempotent element $e$
in a semigroup, we have $\FS(e,e,\dotsc)=\{e\}$.
The following definition resolves this issue.

\begin{definition}
A \emph{proper IP} set in a semigroup is a set that contains a set $\FS(\myseq{a})$, for some
\emph{injective} sequence $\myseq{a}$ in the semigroup.
\end{definition}

For a sequence $\myseq{a}$ in a semigroup and a natural number $n$, let
\begin{align*}
	\FS(a_1,\dotsc,a_n) &:= \set{a_{i_1}+\dotsm+a_{i_m}}{1\le m, i_1<\dotsb<i_m\le n}.
\end{align*}

\begin{lemma}
\label{lem:infiniteIP}
Every infinite set of the form $\FS(\myseq{a})$ (and thus every superset of such a set) is a proper IP set.
\end{lemma}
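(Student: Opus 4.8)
The plan is to extract from the witnessing sequence $\myseq{a}$ a ``sum subsystem'' whose terms are pairwise distinct. First I would record the standard subsystem observation: if $B_1 < B_2 < \dotsb$ are finite, nonempty, consecutive blocks of indices (that is, $\max B_k < \min B_{k+1}$) and we set $b_k := \sum_{i\in B_k} a_i$, summed in increasing order of the index $i$, then for any $k_1 < \dotsb < k_m$ the sum $b_{k_1} + \dotsb + b_{k_m}$ equals $\sum_{i \in B_{k_1}\cup\dotsb\cup B_{k_m}} a_i$, an increasing finite sum of the $a_i$; hence $\FS(\myseq{b}) \subseteq \FS(\myseq{a})$. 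This is also where noncommutativity is handled: because the blocks are ordered and we sum both within and across them in index order, concatenation stays compatible with the increasing-index convention in the definition of $\FS$.

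The heart of the argument is to choose the blocks so that $b_1, b_2, \dotsc$ are pairwise distinct, and this is the step I expect to be the main obstacle, since in a semigroup without cancellation distinct blocks can collapse to the same sum (for instance a constant idempotent sequence). The key auxiliary fact I would prove is that every tail $T_N := \FS(a_{N+1}, a_{N+2}, \dotsc)$ is again infinite. To see this, classify each element of $\FS(\myseq{a})$ according to where the indices of a representing finite sum fall relative to $N$: it lies in the finite set $\FS(a_1,\dotsc,a_N)$, or in $T_N$, or it has the form $p+q$ with $p \in \FS(a_1,\dotsc,a_N)$ and $q \in T_N$. Since $\FS(a_1,\dotsc,a_N)$ is finite, were $T_N$ finite then all three pieces would be finite, contradicting the hypothesis that $\FS(\myseq{a})$ is infinite; hence $T_N$ is infinite.

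With tail-infinitude in hand I would build the blocks recursively. Having chosen $B_1 < \dotsb < B_k$ with pairwise distinct sums $b_1,\dotsc,b_k$ and having set $N := \max B_k$, the infinite tail $T_N$ cannot be contained in the finite set $\{b_1,\dotsc,b_k\}$, so it contains some value $\sum_{i\in B_{k+1}} a_i$ with $B_{k+1}$ a finite increasing index set satisfying $\min B_{k+1} > N$ and with this value outside $\{b_1,\dotsc,b_k\}$. Taking $b_{k+1}$ to be this value continues the construction with $B_{k+1} > B_k$ and $b_{k+1} \notin \{b_1,\dotsc,b_k\}$. The resulting sequence $\myseq{b}$ then has pairwise distinct terms, so it is bijective, and by the subsystem observation $\FS(\myseq{b}) \subseteq \FS(\myseq{a})$. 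Therefore $\FS(\myseq{a})$, and hence any superset of it, is a proper IP set, as required.
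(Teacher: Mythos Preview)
Your proof is correct and follows essentially the same approach as the paper: both arguments use the decomposition $\FS(\myseq{a}) = \FS(a_1,\dotsc,a_N)\cup T_N\cup(\FS(a_1,\dotsc,a_N)+T_N)$ to establish that every tail $T_N$ is infinite, and then recursively select a bijective sumsequence by picking each new block-sum from the tail outside the finitely many previously chosen values. Your write-up is slightly more explicit about why $\FS(\myseq{b})\subseteq\FS(\myseq{a})$ in the noncommutative setting, but the content is the same.
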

\begin{proof}
Let $n$ be a natural number. Since
\begin{multline*}
\FS(\myseq{a}) = \FS(a_1,\dotsc,a_n)\cup
\FS(a_{n+1},a_{n+2},\dotsc)\\
\cup(\FS(a_1,\dotsc,a_n)+\FS(a_{n+1},a_{n+2},\dotsc)),
\end{multline*}
the set $\FS(a_{n+1},\allowbreak a_{n+2},\dotsc)$ must be infinite.
It follows that we can construct an injective sequence $\myseq{b}$ such that for each $n$ there is an index $m_n$ such that
$b_n \in \FS(a_1,\dotsc,a_{m_n})$ and $b_{n+1} \in \FS(a_{m_n+1},a_{m_n+2},\dotsc)$. Clearly, $\FS(\myseq{b}) \subseteq \FS(\myseq{a})$.
\end{proof}

\newcommand{\gb}{\G{\bullet}}
\newcommand{\rb}{\textcolor{red}{\bullet}}
In general, the proper IP sets need not be partition regular. Consider the fan semilattice $S$, colored
as follows:
\[
\xymatrix{
\gb & \gb & \gb & \gb & \dotsm\\
& & \rb\ar@{-}[llu]\ar@{-}[lu]\ar@{-}[u]\ar@{-}[ur]
}
\]
The entire semilattice is a proper IP set, but the only monochromatic IP set for this coloring is a singleton.
It follows that, for a semigroup to have the proper IP sets partition regular, it cannot include the fan
semilattice as a subsemigroup.

For which semigroups are the proper IP sets partition regular?
This problem was raised, for example, by Andrews and Goldbring~\cite[Question~4.9]{AG}.
We provide a complete solution:
the proper IP sets in a semigroup are partition regular if and only if
the semigroup has no subsemigroup of one of three explicit types.
We also establish the equivalence of this feature to other fundamental notions of
additive coloring theory.
Finally, using our results, we prove that if a semigroup $S$ contains a monochromatic proper IP set for each finite coloring of the entire semigroup,
then for each finite coloring it contains \emph{infinitely many} pairwise disjoint monochromatic proper IP sets.

\section{Proper sumsequences}

Let $\myseq{a}$ be a sequence in a semigroup.
For a finite index set $F=\{i_1,\dotsc,i_m\}$ with $i_1<\dotsb<i_m$, let
\[
a_F := a_{i_1}+\dotsb+a_{i_m}.
\]
For finite index sets $F_1,F_2 \subseteq \bbN$, we write $F_1<F_2$ if all elements of the set $F_1$ are smaller than all elements of the set $F_2$.
The sequence $\myseq{a}$ is \emph{proper}~\cite[Definition~1.3]{T} if $a_{F_1}\neq a_{F_2}$ for all $F_1<F_2$.

\begin{remark}
A proper IP set need not contain a set $\FS(\myseq{a})$ for a proper sequence $\myseq{a}$.
However, since proper sequences are injective, an IP set containing a set $\FS(\myseq{a})$ for a proper sequence $\myseq{a}$ must be a proper IP set.
These notions of properness are only partially related.
\end{remark}

\begin{lemma}\label{lem:grp}
Every injective sequence in a group has a proper subsequence.
\end{lemma}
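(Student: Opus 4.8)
The plan is to build the desired subsequence greedily, choosing its terms one at a time from the tails of $\myseq{a}$ while maintaining properness of every finite initial segment. Concretely, I would construct indices $n_1<n_2<\dotsb$ and set $b_k:=a_{n_k}$, so that at each stage the finite sequence $b_1,\dotsc,b_k$ satisfies $b_{F_1}\neq b_{F_2}$ for all nonempty index sets $F_1<F_2$ contained in $\{1,\dotsc,k\}$. Since any pair $F_1<F_2$ of finite index sets lies in some $\{1,\dotsc,k\}$, the limiting sequence $\myseq{b}$ is then proper, and it is a subsequence of $\myseq{a}$ by construction.

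The heart of the argument is the inductive step: having fixed $b_1,\dotsc,b_k$, I must choose $b_{k+1}$ from a tail of $\myseq{a}$ so that no new violation $b_{F_1}=b_{F_2}$ with $F_1<F_2\subseteq\{1,\dotsc,k+1\}$ is created. The key observation is that the new index $k+1$ is the largest available, so whenever it occurs in such a pair it must be the top element of the later block $F_2$; it can never appear in $F_1$, because $F_1<F_2$ forces $\max F_1<\min F_2\le k+1$, hence $\max F_1<k+1$. Thus every genuinely new constraint has the form $b_{F_1}\neq b_G+b_{k+1}$, where $G\subseteq\{1,\dotsc,k\}$ is the (possibly empty) part of $F_2$ lying below $k+1$ and $F_1<G$ (or $F_1\subseteq\{1,\dotsc,k\}$ arbitrary nonempty when $G=\emptyset$). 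Here is where I would use that the ambient semigroup is a group: each such constraint is equivalent to $b_{k+1}\neq (-b_G)+b_{F_1}$ (reading $b_\emptyset$ as the identity), so it forbids exactly one value of $b_{k+1}$. As $F_1$ and $G$ range over the finitely many subsets of $\{1,\dotsc,k\}$, only finitely many values of $b_{k+1}$ are forbidden.

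It remains to check that a legal choice of $b_{k+1}$ always exists. Because $\myseq{a}$ is bijective, its terms are pairwise distinct, so the tail $\set{a_n}{n>n_k}$ is an infinite subset of the group; since only finitely many values are forbidden, some $a_{n_{k+1}}$ with $n_{k+1}>n_k$ avoids them all, and we set $b_{k+1}:=a_{n_{k+1}}$. The forbidden set in fact already contains all the previous terms $b_1,\dotsc,b_k$, arising from singleton $F_1$ with empty $G$, so this choice is automatically injective and the limiting sequence is again bijective. This completes the induction and the construction.

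The step I expect to require the most care is the combinatorial bookkeeping in the inductive step: precisely delimiting which pairs $F_1<F_2$ become newly relevant once $k+1$ is adjoined, and verifying that each of them pins down a \emph{single} forbidden value rather than a larger set. Once this is in place, the finiteness of the forbidden set together with the infinitude of each tail makes the extension routine. The group hypothesis enters only through the solvability of $b_{F_1}=b_G+b_{k+1}$ for $b_{k+1}$, which is exactly the feature that fails in a general semigroup.
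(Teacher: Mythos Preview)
Your argument is correct and is essentially the same greedy construction as the paper's: choose $b_{k+1}$ from the tail avoiding a finite forbidden set computed from $b_1,\dotsc,b_k$ using inverses. The paper packages your forbidden values more compactly as the single set $-A+A$ with $A:=\FS(b_1,\dotsc,b_k)\cup\{0\}$, and verifies properness by noting that $b_{F_2}=a+b_{\max F_2}\notin A$ while $b_{F_1}\in A$; your explicit enumeration of the new constraints $(F_1,G)$ unpacks the same idea.
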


\begin{proof}
Let $\myseq{a}$ be an injective sequence in a group. We construct a proper subsequence  $b_1:=a_{m_1},b_2:=a_{m_2},\dotsc$ by induction.

Let $m_1 := 1$.
	Assume that $m_1,\dotsc,m_n$ are defined.
	Since the set $A:=\FS(b_1,\allowbreak \dotsc,\allowbreak b_n)\cup\{0\}$ is finite, so is the set $-A+A$.
	Choose an index $m_{n+1}$ such that
	\[
	a_{m_{n+1}} \in \FS(a_{m_n+1}, a_{m_n+2},\dotsc)\setminus (-A+A).
	\]
Let $F_1<F_2$ be finite index sets, and let $n:=\max F_2$. Then there is $a\in A:=\FS(b_1,\dotsc \allowbreak,b_{n-1})\cup\{0\}$ such that
$b_{F_2}=a+b_n$, and $a+b_n\notin A$, whereas $b_{F_1}\in A$.
\end{proof}

A \emph{sumsequence} (traditionally called \emph{sum subsystem})
of a sequence $\myseq{a}$ is a sequence $a_{F_1},a_{F_2},\dotsc$, for an increasing sequence
$F_1<F_2<\dotsb$ of finite index sets.
The proof of Lemma~\ref{lem:infiniteIP} shows that if a set $\FS(\myseq{a})$ is infinite, then the sequence
$\myseq{a}$ has an injective sumsequence.
Among other things, our main result (Theorem~\ref{mr1})
characterizes the semigroups where every injective sequence has a proper sumsequence.

\begin{proposition}\label{prp:Tail}
A sequence $\myseq{a}$ in a semigroup has a proper sumsequence if and only if
it has a sumsequence $\myseq{b}$ with
\[
\bigcap_{n=1}^{\infty}\FS(b_n,b_{n+1},\dotsc)=\emptyset.
\]
\end{proposition}
\begin{proof}
($\Rightarrow$) Let $\myseq{b}$ be a proper sumsequence.

$(\Leftarrow)$ We construct a proper subsequence $b_{m_1},b_{m_2},\dotsc$ of the sequence $\myseq{b}$.

Let $m_1:=1$.
Assume that we have defined $m_1,\dotsc,m_n$.
Since the set $\FS(b_{m_1},\allowbreak\dotsc,\allowbreak b_{m_n})$ is finite, and $\bigcap_{m=1}^{\infty}\FS(b_m,b_{m+1},\dotsc)=\emptyset$,
we can choose an index $m_{n+1}$ with
\[
\FS(b_{m_1},\dotsc,b_{m_n})\cap\FS(b_{m_{n+1}},b_{m_{n+1}+1},\dotsc)=\emptyset.
\]

Let $c_1:=b_{m_1},c_2:=b_{m_2},\dotsc$. For each $n$, we have
\[
\FS(c_1,\dotsc,c_n)\cap\FS(c_{n+1},c_{n+2},\dotsc)=\emptyset.
\]
Let $F_1<F_2$ be finite index sets. Let $n:=\max F_1$.
Then $c_{F_1}\in \FS(c_1,\dotsc,c_n)$ and $c_{F_2}\in \FS(c_{n+1},c_{n+2},\dotsc)$,
and thus $c_{F_1}\neq c_{F_2}$.
\end{proof}

In particular, if a sequence $\myseq{a}$ has no proper sumsequence, then the set
$\bigcap_{n=1}^{\infty} \FS(a_n,\allowbreak a_{n+1},\dotsc)$ is nonempty.

\begin{lemma}\label{lem:subsemi}
Let $\myseq{a}$ be an injective sequence in a semigroup $S$ with no proper sumsequence.
Let $A_\infty:=\bigcap_{n=1}^{\infty} \FS(a_n,a_{n+1},\dotsc)$.
Then:
\begin{enumerate}
\item For each element $s\in A_\infty$ and each $n$, there is a finite index set $F\ge n$ with
$s=a_F$.
\item The set $A_\infty$ is a subsemigroup of $S$.
\item Every sequence $\myseq{b}$ in  $A_\infty$
is a sumsequence of $\myseq{a}$.
\end{enumerate}
\end{lemma}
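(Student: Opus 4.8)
The plan is to derive all three items from a single definitional unwinding of the intersection $A_\infty=\bigcap_{n=1}^\infty\FS(a_n,a_{n+1},\dotsc)$, and to record at the outset that neither the bijectivity of $\myseq{a}$ nor the absence of a proper sumsequence is actually needed for the three stated conclusions. Those hypotheses enter only through the remark following Proposition~\ref{lem:Tail}, which guarantees that $A_\infty$ is nonempty so that the items are not vacuous; the closure and realization properties themselves hold for the tail-intersection of any sequence.

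For part (1) I would simply spell out membership in the intersection. By definition, $s\in A_\infty$ precisely when $s\in\FS(a_n,a_{n+1},\dotsc)$ for every $n$. Fixing $n$, membership of $s$ in $\FS(a_n,a_{n+1},\dotsc)$ says exactly that $s=a_{i_1}+\dotsb+a_{i_m}$ for some indices $n\le i_1<\dotsb<i_m$; writing $F=\{i_1,\dotsc,i_m\}$, this is the assertion that $s=a_F$ with $F\ge n$. Thus part (1) is nothing more than a restatement of membership in the $n$-th tail.

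Parts (2) and (3) are then the same \emph{staggering} argument, carried out finitely or infinitely often. For part (2), given $s,t\in A_\infty$ and an arbitrary $n$, I would apply (1) twice: first choose a finite set $F_1\ge n$ with $s=a_{F_1}$, set $k:=\max F_1$, and then choose, again by (1), a finite set $F_2\ge k+1$ with $t=a_{F_2}$. Now $F_1<F_2$, so $a_{F_1}+a_{F_2}=a_{F_1\cup F_2}$ with $F_1\cup F_2\ge n$, whence $s+t\in\FS(a_n,a_{n+1},\dotsc)$; since $n$ was arbitrary, $s+t\in A_\infty$ and $A_\infty$ is a subsemigroup. For part (3), given a sequence $\myseq{b}$ in $A_\infty$, I would build increasing index sets $F_1<F_2<\dotsb$ by recursion: having chosen $F_1<\dotsb<F_n$, put $k:=\max F_n$ and use (1) to pick $F_{n+1}\ge k+1$ with $b_{n+1}=a_{F_{n+1}}$. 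Starting from any $F_1$ with $b_1=a_{F_1}$, this produces an increasing chain of finite index sets with $a_{F_n}=b_n$ for all $n$, which is exactly the statement that $\myseq{b}$ is a sumsequence of $\myseq{a}$.

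There is no genuine obstacle in this argument; the only point requiring care is the ordering of the index sets. The crucial leverage is the ``for each $n$'' clause of part (1): because $s$ lies in \emph{every} tail $\FS(a_n,a_{n+1},\dotsc)$, its index set can be taken to start arbitrarily far out, and this is precisely what lets the sets $F_1,F_2,\dotsc$ be made pairwise ordered in parts (2) and (3). I would also flag explicitly the identity $a_{F_1}+a_{F_2}=a_{F_1\cup F_2}$ valid when $F_1<F_2$, since it relies on all indices of $F_1$ preceding those of $F_2$ and is what converts a product in $S$ back into a single term $a_F$.
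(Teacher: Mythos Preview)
Your proof is correct and follows essentially the same approach as the paper: part~(1) is a definitional unwinding, part~(2) staggers two applications of~(1) to get $F_1<F_2$ and hence $s+t=a_{F_1\cup F_2}$, and part~(3) iterates this recursively---the paper simply writes ``This follows from~(1)'' where you spell out the recursion. Your remark that bijectivity and the absence of a proper sumsequence are used only to secure $A_\infty\neq\emptyset$ (via Proposition~\ref{lem:Tail}) is accurate and a nice observation beyond what the paper makes explicit.
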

\begin{proof}
(1) For each $n$, we have $s\in \FS(a_n,a_{n+1},\dotsc)$.

(2) By Proposition~\ref{prp:Tail}, we have $A_\infty \neq \emptyset$.
Let $s_1,s_2 \in A_\infty$. For each $n$, by (1) there are index sets $n\le F_1<F_2$
with $s_1=a_{F_1}$ and $s_2=a_{F_2}$.
Then $s_1+s_2=a_{F_1\cup F_2}\in \FS(a_n,a_{n+1},\dotsc)$.
Thus, $s_1+s_2\in A_\infty$.

(3) This follows from (1).
\end{proof}

\begin{definition}
An injective sequence $\myseq{a}$ in a semigroup is \emph{minimal} if, for each injective sumsequence $\myseq{b}$ of $\myseq{a}$, we have
		\[
		\bigcap_{n=1}^{\infty}\FS(b_n,b_{n+1},\dotsc) = \bigcap_{n=1}^{\infty} \FS(a_n,a_{n+1},\dotsc).
	    \]
\end{definition}

\begin{proposition}\label{prp:minsubsemi}
	Let $S$ be a semigroup and $\myseq{a}$ be an injective sequence with no proper sumsequence.
	There is an injective sumsequence $\myseq{b}$ of $\myseq{a}$ such that:
	\begin{enumerate}
		\item  The set $\bigcap_{n=1}^{\infty}\FS(b_n,b_{n+1},\dotsc)$ is a finite subsemigroup of $S$.
		\item The sequence $\myseq{b}$ is minimal.
\end{enumerate}
\end{proposition}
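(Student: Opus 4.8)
The plan is to refine \(\myseq a\) through bijective sumsequences while tracking the tail intersection \(A_\infty(\myseq c):=\bigcap_{n}\FS(c_n,c_{n+1},\dots)\). Two observations set the stage. If \(\myseq c\) is a bijective sumsequence of \(\myseq b\), then each tail of \(\myseq c\) lies inside a tail of \(\myseq b\), so \(A_\infty(\myseq c)\subseteq A_\infty(\myseq b)\): refinement only shrinks the tail intersection. Moreover, every bijective sumsequence of \(\myseq a\) again has no proper sumsequence (a sumsequence of it is one of \(\myseq a\)), so by the remark following Proposition~\ref{lem:Tail} its tail intersection is nonempty, and by Lemma~\ref{lem:subsemi} it is a subsemigroup into which every bijective sequence embeds as a sumsequence. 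Thus the whole argument takes place inside the family of nonempty subsemigroups \(A_\infty(\myseq b)\), ordered by refinement, and it remains to locate a finite, \(\subseteq\)-minimal member.

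The crux is finiteness: I would first produce a bijective sumsequence \(\myseq b\) with \(A_\infty(\myseq b)\) finite. The structural leverage comes from two exclusions. No element \(s\) of such an \(A_\infty\) can generate an infinite monogenic subsemigroup: otherwise \(s,\,s+s,\,s+s+s,\dots\) is a bijective sequence in \(A_\infty\), hence (Lemma~\ref{lem:subsemi}(3)) a bijective sumsequence of \(\myseq a\), yet inside a copy of \(\bbN\) every bijective sequence has empty tail intersection. Likewise \(A_\infty\) contains no infinite subgroup, since a bijective enumeration of one would, by Lemma~\ref{lem:grp}, have a proper (sub)sumsequence and hence empty tail intersection. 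Granting these, I would show that an infinite \(A_\infty\) always admits a bijective sequence inside it whose tail intersection is strictly smaller, and use this to collapse \(A_\infty\) to a finite set; the mechanism is to isolate a subset of \(A_\infty\) on which the finite-sum operation ``pinches down'' to a proper subsemigroup, exactly as the singletons do inside \((\mathcal P_{\mathrm{fin}}(\bbN),\cap)\).

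Once finiteness is available, minimality is a clean well-ordering argument that avoids Zorn's lemma. Among all bijective sumsequences \(\myseq b\) of \(\myseq a\) with \(A_\infty(\myseq b)\) finite --- a nonempty family by the previous step --- I would choose one minimizing the positive integer \(|A_\infty(\myseq b)|\). For any bijective sumsequence \(\myseq c\) of this \(\myseq b\), which is also a bijective sumsequence of \(\myseq a\), we have \(A_\infty(\myseq c)\subseteq A_\infty(\myseq b)\) with \(|A_\infty(\myseq c)|\ge|A_\infty(\myseq b)|\) by minimality of the cardinality; a subset of a finite set of no smaller size equals it, so \(A_\infty(\myseq c)=A_\infty(\myseq b)\). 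Hence \(\myseq b\) is minimal, and \(A_\infty(\myseq b)\) is a finite subsemigroup by Lemma~\ref{lem:subsemi}(2), giving both conclusions.

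The main obstacle is the finiteness step. Nesting and nonemptiness do not by themselves bound \(A_\infty\): infinite tail intersections genuinely occur, and only collapse to a finite semigroup after a well-chosen refinement. The real work is therefore to turn the two non-existence statements --- no infinite monogenic subsemigroup and no infinite subgroup --- into a guarantee that an infinite \(A_\infty\) can always be shrunk, which is what forces the finite, minimal tail intersection we are after.
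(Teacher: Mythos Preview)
Your treatment of part~(2) is correct and matches the paper's: once a bijective sumsequence with finite tail intersection exists, passing to one that minimizes $|A_\infty(\myseq b)|$ gives minimality, since refinement can only shrink $A_\infty$. The observation that $A_\infty(\myseq c)\subseteq A_\infty(\myseq b)$ for sumsequences is also correct and used implicitly in the paper.

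The gap is in part~(1), as you yourself flag. You establish that $A_\infty$ contains no copy of $(\bbN,+)$ and no infinite subgroup, but these two exclusions do not by themselves force the existence of a bijective sequence in $A_\infty$ with finite (or even strictly smaller) tail intersection. An infinite periodic semigroup with no infinite subgroup can still be very far from the ``pinching'' behaviour you allude to, and your sentence ``Granting these, I would show that an infinite $A_\infty$ always admits a bijective sequence inside it whose tail intersection is strictly smaller'' is precisely the missing lemma. Even if you had it, ``strictly smaller'' does not terminate in finitely many steps, so you would also need to explain why the process reaches a finite set. The paper closes this gap by invoking Shevrin's classification of infinite semigroups: any infinite semigroup contains a subsemigroup of one of seven explicit types. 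Five of these (including your two) admit proper sequences, which by Lemma~\ref{lem:subsemi}(3) would be proper sumsequences of $\myseq a$, a contradiction; the remaining two types (an infinite $S'$ with $S'+S'$ finite, or the fan semilattice) visibly contain bijective sequences with finite tail intersection. Without Shevrin or a substitute structural result, your two exclusions are not enough to carry the argument.
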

\begin{proof}
(1) By Lemma~\ref{lem:subsemi}, the set $\bigcap_{n=1}^{\infty} \FS(a_n,a_{n+1},\dotsc)$ is a subsemigroup of $S$.
If it is finite, then we are done. Thus, assume that it is infinite.
By a theorem of Shevrin~\cite{Shev}, this subsemigroup has one of the following subsemigroups:
\begin{enumerate}
 	    \item $(\bbN,+)$.
		\item An infinite periodic group.
		\item An infinite right or left zero semigroup.
		\item $(\bbN,\wedge)$ where $m \wedge n = \operatorname{min}\{m,n\}$.
		\item $(\bbN,\vee)$ where $m \vee n = \operatorname{max}\{m,n\}$.
		\item An infinite semigroup $S$ with $S+S$ finite.
		\item The fan semilattice $(\bbN,\wedge)$ with $m \wedge n=1$ for distinct $m,n$.
\end{enumerate}
The semigroups of types (1)--(5) have proper sequences, and by Lemma~\ref{lem:subsemi}(3),
every such sequence is a sumsequence of the initial sequence. Thus, our subsemigroup
must have a subsemigroup of type (6) or (7).  Each of these, in turn, contains an injective sequence
$\myseq{b}$ with $\bigcap_{n=1}^{\infty}\FS(b_n,b_{n+1},\dotsc)$ finite.
By Lemma~\ref{lem:subsemi}(3) again, the sequence $\myseq{b}$ is a sumsequence of the sequence $\myseq{a}$.

(2) Since the semigroup $\bigcap_{n=1}^{\infty}\FS(b_n,\allowbreak b_{n+1},\dotsc)$ is finite, we can move to
an injective sumsequence $\myseq{c}$ of $\myseq{b}$ such that the set $\bigcap_{n=1}^{\infty}\FS(c_n,\allowbreak c_{n+1},\dotsc)$ is minimal with respect to inclusion.
\end{proof}

\begin{proposition}
\label{prp:Rideal}
Assume that an injective sequence $\myseq{a}$ in a semigroup is minimal, and the set $A_\infty:=\bigcap_{n=1}^{\infty}\FS\allowbreak(a_n, a_{n+1},\dotsc)$ is a finite subsemigroup.
Then there is a right ideal $R$ of the semigroup $A_\infty$ and an injective sumsequence
$\myseq{b}$ of $\myseq{a}$ such that $R + \langle\myseq{b}\rangle \subseteq R$.
\end{proposition}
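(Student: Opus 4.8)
The plan is to reduce the inclusion $R+\langle\myseq b\rangle\subseteq R$ to a statement about single generators, and then to locate $R$ inside a right-translation–stable part of $A_\infty$ extracted by a Ramsey argument. The main point to be engineered is that some minimal right ideal of $A_\infty$ survives being added on the left of the generators $b_i$; once this is arranged, a left-identity idempotent finishes the job.

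First I would observe that it suffices to produce a right ideal $R$ of $A_\infty$ and a bijective sumsequence $\myseq b$ with $R+b_i\subseteq R$ for \emph{each generator} $b_i$. Indeed, every element of $\langle\myseq b\rangle$ is a finite sum $b_{i_1}+\dotsb+b_{i_k}$, and a left-to-right induction (add one generator at a time, remaining in $R$ at each step by associativity) then gives $r+b_{i_1}+\dotsb+b_{i_k}\in R$. This reduction sidesteps the repeated and out-of-order sums that make $\langle\myseq b\rangle$ strictly larger than $\FS(\myseq b)$, which is the only delicate feature of $\langle\myseq b\rangle$.

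Next I would stabilize the way $A_\infty$ absorbs elements on the right. For $x\in\FS(\myseq a)$ put $D_x:=\set{r\in A_\infty}{r+x\in A_\infty}\subseteq A_\infty$; since $A_\infty$ is finite, $x\mapsto D_x$ is a finite coloring of the IP set $\FS(\myseq a)$. By the Furstenberg--Galvin--Hindman Theorem~\ref{thm:FGH} I pass to a sumsequence on which this color is constant, and using Lemma~\ref{lem:infiniteIP} I may take the witnessing sumsequence $\myseq b$ to be bijective; thus $D_x=D$ for all $x\in\FS(\myseq b)$. By minimality of $\myseq a$ we retain $\bigcap_{n}\FS(b_n,b_{n+1},\dotsc)=A_\infty$, so in particular $A_\infty\subseteq\FS(\myseq b)$ and every element of $A_\infty$ is a color-$D$ element.

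The crux—and the step I expect to be the main obstacle—is to show that $D$ contains a minimal right ideal $R$ of $A_\infty$, so that $r+b_i=\phi(r)\in A_\infty$ is available for $r\in R$. The key observation is that \emph{every idempotent of $A_\infty$ lies in $D$}: if $f=f+f$, then $f\in A_\infty\subseteq\FS(\myseq b)$ gives $D_f=D$, while $f+f=f\in A_\infty$ gives $f\in D_f$, hence $f\in D$. A short check shows $D$ is a left ideal of $A_\infty$ (for $s\in A_\infty$ and $r\in D$ we have $(s+r)+b_i=s+(r+b_i)\in A_\infty$ since both summands lie in the subsemigroup $A_\infty$ of Lemma~\ref{lem:subsemi}). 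Now invoking the structure theory of the finite semigroup $A_\infty$: its kernel $K$ is completely simple, and each $h\in K$ satisfies $h=h+f_h$, where $f_h$ is the identity idempotent of the maximal subgroup containing $h$; as $f_h\in D$ and $D$ is a left ideal, $h\in A_\infty+f_h\subseteq D$. Hence $K\subseteq D$, and I may choose any minimal right ideal $R\subseteq K\subseteq D$.

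Finally I would close the argument. Fix an idempotent $e\in R$; in a minimal right ideal of a finite semigroup $e$ is a left identity, and $R=e+A_\infty$. For $r\in R$ and any generator $b_i$ we have $r+b_i\in A_\infty$ (as $R\subseteq D$), so
\[
r+b_i=(e+r)+b_i=e+(r+b_i)\in e+A_\infty=R .
\]
Thus $R+b_i\subseteq R$ for every generator, and by the first paragraph $R+\langle\myseq b\rangle\subseteq R$, as required. The only nonroutine ingredients are the Ramsey step (Theorem~\ref{thm:FGH} together with the bijectivity refinement of Lemma~\ref{lem:infiniteIP}) and the passage ``all idempotents in $D$, hence $K\subseteq D$,'' which is where the completely simple structure of the kernel is used.
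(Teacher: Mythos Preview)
Your approach is genuinely different from the paper's: you use a single Ramsey coloring by the ``absorption set'' $D_x=\set{r\in A_\infty}{r+x\in A_\infty}$ together with the kernel structure of the finite semigroup $A_\infty$, whereas the paper proceeds by a direct dichotomy (either some idempotent $e$ admits a bijective sumsequence with $e+\FS(\myseq b)\subseteq A_\infty$, or not) and a lengthy construction culminating in a contradiction via the maximal subgroup $H(e)$ and Lemma~\ref{lem:grp}.

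There is, however, a real gap at the bijectivity step. After Theorem~\ref{thm:HS} hands you a sumsequence $\myseq c$ with $\FS(\myseq c)$ monochromatic, you invoke Lemma~\ref{lem:infiniteIP} to pass to a bijective sumsequence; but that lemma requires $\FS(\myseq c)$ to be infinite, and nothing guarantees this. Since $\myseq a$ has no proper sumsequence (this follows from minimality together with $A_\infty\neq\emptyset$ and Proposition~\ref{lem:Tail}), sumsequences with finite $\FS$ certainly exist---for instance the constant sequence at any idempotent of $A_\infty$ is a sumsequence of $\myseq a$ by Lemma~\ref{lem:subsemi}(3). You give no reason why the monochromatic one can be chosen with infinite $\FS$.

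Worse, once you \emph{do} have a bijective $\myseq b$ with $D_x\equiv D$ on $\FS(\myseq b)$, your ``crux'' collapses: minimality gives $A_\infty\subseteq\FS(\myseq b)$, and for every $x\in A_\infty$ one has $D_x=A_\infty$ (since $A_\infty$ is a subsemigroup), so in fact $D=A_\infty$ automatically. The idempotent/kernel argument is then unnecessary---you could take $R=A_\infty$ outright. Thus the entire content of the proposition is hidden in the step you treat as routine: producing a \emph{bijective} sumsequence $\myseq b$ with $A_\infty+b_i\subseteq A_\infty$ for all $i$. That is precisely the paper's ``first case,'' and the paper spends the bulk of its proof showing (by contradiction, through Cases~1 and~2) that the negation is impossible. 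Your sketch does not supply this, so as written the argument is incomplete.
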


\begin{proof}
First, assume that there is an idempotent element $e$ in $A_\infty$ and an injective sumsequence $\myseq{b}$ of $\myseq{a}$ such that $e + \FS(\myseq{b})\subseteq A_\infty$. Consider the right ideal $R:=e+A_\infty$ of $A_\infty$.
Since the sequence $\myseq{a}$ is minimal, we have
\[
A_\infty =  \bigcap_{n=1}^{\infty}\FS(b_n,b_{n+1},\dotsc).
\]
Since the semigroup $A_\infty$ is finite, there is a number $m$ such that
$A_\infty \subseteq \FS(b_1,\allowbreak\dotsc,\allowbreak b_m)$. Fix a number $k>m$. Then
\[
R+ b_k = e+A_\infty + b_k \subseteq e+ \FS(b_1,\dotsc,b_m) +b_k \subseteq e + \FS(\myseq{b})\subseteq A_\infty.
\]
As $R=e+A_\infty$, we have $e+R=R$, and thus
\[
R+ b_k=e+R+ b_k\subseteq e+A_\infty=R.
\]
This shows that $R+\langle b_{m+1},b_{m+2},\dotsc\rangle \subseteq R$, and
the set $R$ and the sumsequence $(b_{m+1},b_{m+2},\allowbreak\dotsc)$ are as required.

Towards a contradiction, suppose that for each idempotent $e \in A_\infty$ and every injective sumsequence $\myseq{b}$, we have $e + \FS(\myseq{b}) \nsubseteq A_\infty$. Let $e \in A_\infty$ be an idempotent. We construct by induction an injective sumsequence $\myseq{b} \not \in A_\infty$ such that $e+b_n =b_n$ for all $n$.

Assume that $b_1 := a_{F_1} , \dotsc , b_n :=a_{F_n}$ are defined. Since $b_1,\dotsc,b_n \not \in A_\infty$, there is a number $m>F_n$ such that
\[ b_1, \dotsc,b_n \not \in \FS(a_m,a_{m+1},\dotsc).  \]
Since $e \in A_\infty \subseteq \FS(a_m,a_{m+1},\dotsc)$, there is a finite index set $G\ge m$ such that $e = a_G$.
Denote $k := \max G$. By our assumption, there is a finite sum $a_H \in \FS(a_{k+1},a_{k+2},\dotsc)$ such that $e+a_H \not \in A_\infty$.
Define $b_{n+1} := a_{G \cup H} = e+a_H$. It is clear that $b_1,\dotsc,b_n,b_{n+1} \not \in A_\infty$ is an injective sumsequence and that $e+b_{n+1}=b_{n+1}$.

If there is another idempotent $e' \in A_\infty$, we repeat this construction for the sumsequence $\myseq{b}$ and the idempotent $e'$.
Since the set $A_\infty$ is finite, we eventually obtain an injective sumsequence $\myseq{c}$ such that all idempotents in $A_\infty$ are left identities for $\myseq{c}$. We consider the two possible cases.

\emph{Case 1.} There is an idempotent $e \in A_\infty$ such that for every $m \in \bbN$, we have
	\[ \FS(c_m,c_{m+1}, \dotsc) + e \nsubseteq A_\infty. \]
As above, we construct an injective sumsequence $\myseq{d}$ such that $d_n+e=d_n$ for all $n$.
The subsemigroup $M:=\langle \myseq{d} \rangle$ is a monoid with the identity $e$.
Since $e \in A_\infty=\bigcap_{n=1}^{\infty}\FS(d_n, \allowbreak d_{n+1},\dotsc)$,
there is a subsequence $d_{n_1},d_{n_2},\dotsc$ of elements such that, for each $i$,
there is a finite index set $F>n_i$ with $d_{n_i}+d_F=e$.
For each $m$, let $s_m := d_{n_m}$, an element with a right $e$-inverse.
Similarly, since $e \in \bigcap_{n=1}^{\infty}\FS(s_n, \allowbreak s_{n+1}, \dotsc)$, we have a subsequence $s_{n_1},s_{n_2},\dotsc$ of elements with a left $e$-inverse. Since they already have a right $e$-inverse, we have
$s_{n_1},s_{n_2},\dotsc \in H(e)$ where $H(e)$ is the largest subgroup in $M$ with identity $e$.
By Lemma~\ref{lem:grp}, every injective sequence in a group has a proper subsequence, and thus
the sequence $\myseq{a}$ has a proper sumsequence; a contradiction.

\emph{Case 2.} Since the set $A_\infty$ is finite, in the remaining case there is a number $m$ such that
$\FS(c_m,c_{m+1},\allowbreak\dotsc) + e \subseteq A_\infty$ for every idempotent $e \in A_\infty$.
Since every idempotent element $e \in A_\infty$ is a left identity for
$\myseq{c}$, it is also a left identity for $A_\infty \subseteq \FS(\myseq{c})$. It follows that
\begin{equation}
\label{1}
\FS(c_m,c_{m+1},\dotsc) +  A_\infty = \FS(c_m,c_{m+1},\dotsc) + e + A_\infty \subseteq A_\infty + A_\infty \subseteq A_\infty.
\end{equation}

We claim that for each element $s \in A_\infty$ and for any $n \in \bbN$,
\begin{equation}
\label{2}
c_n \in A_\infty \text{ whenever } s+c_n \in A_\infty.
\end{equation}
Indeed, let $k$ be a number such that $k \cdot s \in A_\infty$ is an idempotent. Then
\[  c_n = k \cdot s + c_n = (k-1) \cdot s + (s+c_n) \in A_\infty. \]

We construct an injective sumsequence $\myseq{d}$ of $\myseq{c}$ such that for each $m$
we have $d_m := s_m + c'_m$ for some element $s_m \in A_\infty$ and some element $c'_m:=c_{n_m} \not \in A_\infty$.
Let $F = \{\,m_1<\dotsb<m_k \,\}$ be a finite index set. Then
\[d_F = d_{m_1} + \dotsb + d_{m_k} = s_{m_1} + c'_{m_1} + \dotsb + c'_{m_{k-1}} + s_{m_k} + c'_{m_k}. \]
By Equation~\eqref{1}, we have
\[ s := s_{m_1} + c'_{m_1} + \dotsb + c'_{m_{k-1}} + s_{m_k} \in A_\infty \]
since $s_{m_k} \in A_\infty$. By Equation~\eqref{2}, we have $d_F = s + c'_{m_k} \not \in A_\infty$.
This is true for every finite index set $F$, and therefore $\FS(\myseq{d}) \cap A_\infty = \emptyset$,
contradicting the minimality of the sequence $\myseq{a}$.
\end{proof}

\begin{example}
	Consider the semigroup $\bbN \cup \{0\}$ with addition modulo $k$. The sequence $1,2,3,\dotsc$ has the subsequence $k,2k,3k,\dotsc$ with
	$\bigcap_{n=1}^{\infty}\FS(nk,(n+1)k,\dotsc)=\{0\}$, clearly a minimal subsemigroup.
	It is also clear that $\{0\} + \langle nk,(n+1)k,\dotsc \rangle = \{0\}$.
\end{example}

\section{Partition regularity of proper IP sets and related notions}

For a sequence $\myseq{a}$ in a semigroup, let
\[ \FS_{\ge 2}(\myseq{a}) := \set{a_{i_1} + \dotsb + a_{i_m}}{m \ge 2, i_1 < \dotsb < i_m}. \]

\begin{proposition} \label{prp:fs2}
Let $\myseq{a}$ be an injective sequence in a semigroup $S$ with no proper sumsequence.
There is an injective sumsequence $\myseq{b}$ with $\FS_{\ge 2}(\myseq{b})$ finite.
\end{proposition}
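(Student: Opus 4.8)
The plan is to reduce to a rigid finite core and then extract a sumsequence whose pairwise sums already land in it. Since a sumsequence of a sumsequence is a sumsequence, it is enough to prove the claim after replacing $\myseq{a}$ by any bijective sumsequence. First I apply Proposition~\ref{prp:minsubsemi} to assume that $\myseq{a}$ is minimal and that $A_\infty:=\bigcap_{n=1}^{\infty}\FS(a_n,a_{n+1},\dotsc)$ is a finite subsemigroup. Next I apply Proposition~\ref{prp:Rideal}; passing to the sumsequence it provides (minimality and finiteness of $A_\infty$ are inherited, since every sumsequence of a minimal sequence has the same tail‑intersection), I may assume $R+\langle\myseq{a}\rangle\subseteq R$ for some right ideal $R$ of $A_\infty$. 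As $A_\infty$ is finite, the subsemigroup $R$ contains an idempotent $e$, and then $e+\FS(\myseq{a})\subseteq R+\langle\myseq{a}\rangle\subseteq R$.

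It suffices to build a bijective sumsequence $\myseq{c}$ of $\myseq{a}$ with $c_i+c_j\in A_\infty$ for all $i<j$: writing a sum of $m\ge 2$ terms as $(c_{i_1}+c_{i_2})+c_{i_3}+\dotsb+c_{i_m}$, the head lies in $A_\infty$ and each subsequent summand keeps the partial sum in $A_\infty$, so $\FS_{\ge 2}(\myseq{c})\subseteq A_\infty$ is finite. I construct $\myseq{c}$ greedily, the engine being a \emph{capture} property: for each $x\in\FS(\myseq{a})$ there is an index $m(x)$ with $x+\FS(a_{m(x)},a_{m(x)+1},\dotsc)\subseteq A_\infty$. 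Granting this, at stage $n$ I choose $c_{n+1}$ to be a block‑sum over indices larger than $m(c_1),\dotsc,m(c_n)$ and than $m(s)$ for every $s\in A_\infty$, and different from $c_1,\dotsc,c_n$; the tails $\FS(a_N,a_{N+1},\dotsc)$ are infinite (as in the proof of Lemma~\ref{lem:infiniteIP}), so such a value exists. Then $c_i+c_{n+1}\in A_\infty$ for all $i\le n$ and $A_\infty+c_{n+1}\subseteq A_\infty$, which maintains the invariant and the distinctness of the terms, and $\myseq{c}$ is bijective.

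The main obstacle is the capture property, which I expect to prove by contradiction from minimality. If some $x\in\FS(\myseq{a})$ fails it, then $x+y\notin A_\infty$ for arbitrarily high $y\in\FS(\myseq{a})$, and the goal is to assemble such witnesses into a bijective sumsequence $\myseq{d}$ with $\FS(\myseq{d})\cap A_\infty=\emptyset$; this contradicts minimality, since $\bigcap_{n}\FS(d_n,d_{n+1},\dotsc)=A_\infty$ forces $A_\infty\subseteq\FS(\myseq{d})$. The delicate point, and the step I expect to be hardest, is that the naive witnesses $x+y$ all carry the fixed prefix realizing $x$, so they are not themselves a sumsequence; converting persistent capture‑failure into a genuine $A_\infty$‑avoiding sumsequence is exactly where the idempotent $e$ and the ideal $R$ must do the work.

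For that conversion I plan to mirror Case~2 of the proof of Proposition~\ref{prp:Rideal}: build the terms $d_m$ in the form $s_m+z_m$ with $s_m\in A_\infty$ and $z_m\notin A_\infty$, and control the sums $d_F$ by analogues of the two facts used there, namely that a sufficiently high tail added on the right stabilizes $A_\infty$, and the cancellation‑type implication that $s+z\in A_\infty$ with $s\in A_\infty$ forces $z\in A_\infty$, so that every $d_F$ provably lies outside $A_\infty$. Establishing these two facts in the present capture‑failure configuration, and checking that the construction yields a bijective sumsequence, is the crux; once it is in place minimality delivers the contradiction, the capture property follows, and the greedy construction of the preceding paragraph completes the proof.
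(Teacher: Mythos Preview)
Your reduction to a minimal sequence with finite $A_\infty$ and an absorbing right ideal $R$ is correct, and the greedy construction in your second paragraph is fine \emph{once capture holds}. The genuine gap is the capture property itself, and your plan to prove it does not go through as written.

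You propose to mirror Case~2 of the proof of Proposition~\ref{prp:Rideal}, importing the two implications you call (a) and (b): that a high tail added on the right lands in $A_\infty$, and that $s+z\in A_\infty$ with $s\in A_\infty$ forces $z\in A_\infty$. But in Proposition~\ref{prp:Rideal} those facts were established only \emph{inside a reductio}, under the auxiliary hypothesis that every idempotent of $A_\infty$ is a left identity for the sequence---a hypothesis that was then shown to be contradictory. In your setting you have only the idempotent $e\in R$ with $e+\FS(\myseq{a})\subseteq R$, so $e$ is emphatically \emph{not} a left identity, and there is no reason for (b) to hold; likewise (a) is not available. You acknowledge that establishing these two facts ``is the crux'', but no mechanism is offered, and I do not see one along these lines.

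More structurally, your argument never invokes \emph{maximality} of $R$. In the paper's proof this is the key new ingredient beyond Propositions~\ref{prp:minsubsemi} and~\ref{prp:Rideal}: one first enlarges $R$ to be maximal among right ideals absorbed by some bijective sumsequence, and then runs a three-way dichotomy on how elements $a_F\notin A_\infty$ interact with $R$. One branch yields the desired sumsequence directly; a second branch manufactures a strictly larger absorbed ideal $R\cup R'$, contradicting maximality; the third contradicts minimality of the sequence. Your capture property is essentially the claim that the two contradictory branches cannot occur, but without maximality there is nothing to exclude the second one. In short, the missing idea is not a detail to be filled in---it is the maximality-of-$R$ lever, and the argument needs to be reorganized around it rather than around a direct proof of capture.
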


\begin{proof}
The previous results imply that by moving to an injective sumsequence, we may assume that
the sequence $\myseq{a}$ is minimal,
the set
\[
A_\infty := \bigcap_{n=1}^{\infty}\FS(a_n,\allowbreak a_{n+1},\allowbreak\dotsc)
\]
is finite,
and there is a right ideal $R \le A_\infty$ such that $R + \langle \myseq{a} \rangle \subseteq R$.
Since the set $A_\infty$ is finite, we may assume further (by moving to an injective sumsequence of $\myseq{a}$)
that this right ideal is \emph{maximal} with respect to inclusion, in the sense that
for each right ideal $J \le A_\infty$ with $R\subseteq J$, for which there is an injective sumsequence $\myseq{s}$ of $\myseq{a}$ such that
$J + \langle \myseq{s} \rangle \subseteq R$,  we have $J=R$.

The sequence $\myseq{a}$ is of one of the following types:
	\begin{enumerate}
		\item There is a finite sum $a_F \in \FS(\myseq{a}) \setminus A_\infty$ and a number $n$ such that
		\[ (a_F + \FS(a_n,a_{n+1},\dotsc)) \cap R = \emptyset. \]
		\item There is a finite sum $a_F \in \FS(\myseq{a}) \setminus A_\infty$
		and a number $n$ such that
		\[ \emptyset \neq \set{ a_H \in \FS(a_n,a_{n+1},\dotsc)}{a_F+a_H\in R} \subseteq A_\infty.  \]
		\item Otherwise, for each finite sum $a_F \in \FS(\myseq{a}) \setminus A_\infty$ and each $n$, there is a finite sum $a_H \in \FS(a_n,a_{n+1},\dotsc) \setminus A_\infty$ such that $a_F +a _H \in R$.
It follows that for each finite sum $a_F \in \FS(\myseq{a}) \setminus A_\infty$ there is an injective sumsequence $\myseq{b}$ such that $a_F + b_n \in R$ for all $n$.
		(Thus, since $R + \langle \myseq{a} \rangle \subseteq R$, we actually have $a_F + \langle \myseq{b} \rangle \subseteq R$.)
	\end{enumerate}

	By moving to an appropriate sumsequence, we may have $\myseq{a}$ satisfying one of the following conditions:
	\begin{enumerate}
		\item Every injective sumsequence has an injective  sumsequence of type 1.
		\item Every injective sumsequence has an injective sumsequence of type 2.
		\item Every injective sumsequence is of type 3.
	\end{enumerate}
	We complete the proof addressing these three possible cases.

	\emph{Case 1}. We construct by induction an injective sumsequence $\myseq{c} \not \in A_\infty$,
together with an injective sumsequence $b^{(n)}_1,b^{(n)}_2,\dotsc$ for each $n$ such that the following conditions hold for each $n$:
	\begin{itemize}
		\item The sequence $b^{(n+1)}_1,b^{(n+1)}_2,\dotsc$ is a sumsequence of $b^{(n)}_1,b^{(n)}_2,\dotsc$.
		\item $c_n \in \FS(b^{(n)}_1,b^{(n)}_2,\dotsc)$.
		\item $c_n + \FS(b^{(n+1)}_1,b^{(n+1)}_2,\dotsc) \cap R = \emptyset$.
	\end{itemize}
	Define $b^{(0)}_1,b^{(0)}_2,\dotsc := \myseq{a}$.
Assume we have defined $c_1,\dotsc,c_{n-1}$ and a sumsequence $b^{(i)}_1,b^{(i)}_2,\dotsc$ for all $1 \leq i \leq n$. We now define $c_n$ and $b^{(n+1)}_1,b^{(n+1)}_2,\dotsc$ as requested.

	Since $c_1,\dotsc,c_{n-1} \not \in A_\infty$, there exists a number $k$ such that $c_1, \dotsc,c_{n-1} \in \FS(a_1,\dotsc,a_{k-1})$ and $c_1,\allowbreak c_2,\dotsc,c_{n-1} \not \in \FS(a_k,\allowbreak a_{k+1},\dotsc)$. There exists a number $m$ such that $b^{(n)}_m, \allowbreak b^{(n)}_{m+1},\dotsc \in \FS(a_k,\allowbreak a_{k+1},\dotsc)$.
	The sumsequence $b^{(n)}_m, \allowbreak b^{(n)}_{m+1},\dotsc$ has an injective sumsequence $\myseq{s}$ of type 1.
Therefore, there is an element $s_F \in \FS(\myseq{s}) \setminus A_\infty$ and a number $n$ such that
	\[ (s_F + \FS(s_n,s_{n+1},\dotsc)) \cap R = \emptyset. \]
	Define $c_n :=s_F$ and $b^{(n+1)}_1,b^{(n+1)}_2,\dotsc := s_n,s_{n+1},\dotsc$. The requested conditions are satisfied.

	We prove that $\FS(\myseq{c}) \cap R = \emptyset$, in contradiction to our assumption that
	\[ R \subseteq A_\infty = \bigcap_{n=1}^{\infty} \FS(c_n,c_{n+1},\dotsc), \]
	by the minimality of the sequence $\myseq{a}$.
Indeed, for a finite index set $F:=\{\, m_1< \dotsb <m_k\,\}$, we have
	\[ c_F = c_{m_1} + c_{m_2} +\dotsb+c_{m_k} \in c_{m_1} + \FS(b^{(m_1+1)}_1,b^{(m_1+1)}_2,\dotsc). \]
	Since $(c_{m_1} + \FS(b^{(m_1+1)}_1,b^{(m_1+1)}_2,\dotsc)) \cap R = \emptyset$, we have $c_F \not \in R$.

	\emph{Case 2}. In a similar way, we can construct an injective sumsequence $\myseq{b}$ of $\myseq{a}$ such that for each $n$,
	\[ \emptyset \neq \set{b_H \in \FS(b_{n+1},b_{n+2},\dotsc)}{b_n + b_H \in R}  \subseteq A_\infty.  \]
	The ideal $A_\infty$ is finite, so we may assume that $b_n \not \in A_\infty$ for any $n \in \bbN$.

Since $b_{m_1} + \dotsb + b_{m_n} \in R$ implies $b_{m_2} + \dotsb + b_{m_n} \in A_\infty$ for each finite sum,
we necessarily have a finite sum $b_{m_1} + \dotsb + b_{m_n} \in R$ such that $b_{m_2} + \dotsb + b_{m_n} \in A_\infty \setminus R$.
(Otherwise, given a sum $b_{k_1} + \dotsb + b_{k_r} \in R$ it must be that either $b_{k_r} \in A_\infty$ or there exists $1 \leq l \leq r-1$ such that $b_{k_l}+\dotsb+b_{k_r} \in R$ and $b_{k_{l+1}} + \dotsb + b_{k_r} \not \in A_\infty$.)

Consider the right ideal
	\[R' := \set{s \in A_\infty}{b_{m_1} + s \in R} \le A_\infty. \]
	Since $R'$ is finite and $R' \subseteq A_\infty \subseteq\FS(b_{m_1+1},b_{m_1+2},\dotsc)$, there is a number $k$ such that $R' \subseteq \FS(b_{m_1+1},\allowbreak\dotsc,\allowbreak b_{k-1})$.
Clearly, $b_{m_1}+R' \subseteq R$ and thus $b_{m_1}+R'+\FS(b_k,b_{k+1},\dotsc) \subseteq R$.
Each element in $R'+\FS(b_k,b_{k+1},\dotsc)$ may be presented as a finite sum $b_F$ for a finite index set $F>m_1$.
Since $b_{m_1}+b_F \in R$, we have $b_F \in A_\infty$.
Therefore $R'+\FS(b_k,b_{k+1},\dotsc) \subseteq A_\infty$, and thus
	$R' + \FS(b_k,b_{k+1},\dotsc) \subseteq R'$.

	It follows that $R \cup R'$ is also a right ideal in $A_\infty$ and $(R \cup R') + \FS(b_k,\allowbreak b_{k+1},\dotsc) \subseteq R \cup R'$. Since
	\[ b_{m_2} + \dotsb + b_{m_n} \in R' \setminus R  ,\]
we have $R \subsetneq R \cup R'$; a contradiction to the maximality of $R$.

	\emph{Case 3}. As above, we  construct an injective sumsequence $\myseq{b}$ of the sequence $\myseq{a}$ such that for each $n$, we have
	\[ b_n + \FS(b_{n+1},b_{n+2},\dotsc) \subseteq R.\]
	It follows that $\FS_{\ge 2}(\myseq{b}) \subseteq R$ is finite, as requested.
\end{proof}

\begin{corollary}
	\label{cor:fs2}
	Let $\myseq{a}$ be a minimal injective sequence with no proper sumsequence in a semigroup $S$.
	The semigroup $A_\infty:=\bigcap_{n=1}^{\infty}\FS(a_n,a_{n+1}\dotsc)$ may be embedded in a cyclic semigroup $\langle a \rangle$.
There is an injective sumsequence $\myseq{b}$ and an embedding $\theta \colon A_\infty \to \langle a \rangle$ such that the following statements hold:
	\begin{itemize}
		\item $\FS_{\ge 2}(\myseq{b}) = A_\infty$.
		\item The set $A_\infty$ is a two-sided ideal in the semigroup $\langle \myseq{b} \rangle$.
		\item For all $n<m$, we have $\theta(b_n + b_m) = 2 \cdot a$. If $\theta(s) = k \cdot a$ for some $s \in A_\infty$, then $\theta(b_n + s) = \theta (s + b_n) = (k+1) \cdot a$.
	\end{itemize}
\end{corollary}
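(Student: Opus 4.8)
The plan is to reduce, via the earlier propositions, to a bijective sumsequence $\myseq{b}$ whose length-$2$ sums are all equal, to propagate this constancy to every length by an associativity argument, and then to read off the cyclic structure from the finiteness of $A_\infty$. First I would normalize. By Proposition~\ref{prp:fs2} I may replace $\myseq{a}$ by a bijective sumsequence with $\FS_{\ge 2}$ finite, and by Proposition~\ref{prp:Rideal} I may also retain a right ideal $R\le A_\infty$ with $R+\langle\myseq{a}\rangle\subseteq R$; passing to a tail (a sumsequence) I arrange $\FS_{\ge 2}(\myseq{a})=A_\infty$, while minimality keeps $A_\infty=\bigcap_n\FS(a_n,a_{n+1},\dots)$ fixed throughout. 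This already gives the second bullet: by Lemma~\ref{lem:subsemi}(1) every $s\in A_\infty$ is $a_G$ for arbitrarily large $G$, so $a_n+s=a_{\{n\}\cup G}\in\FS_{\ge2}(\myseq{a})=A_\infty$ (the left ideal), whereas the construction in Proposition~\ref{prp:fs2} forces $\FS_{\ge2}\subseteq R\subseteq A_\infty=\FS_{\ge2}$, so $R=A_\infty$ and $R+\langle\myseq{a}\rangle\subseteq R$ becomes the right ideal. Hence $A_\infty$ is two-sided in $\langle\myseq{a}\rangle$.

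Next I would color each pair $\{n<m\}$ by $a_n+a_m\in A_\infty$; since $A_\infty$ is finite, Ramsey's theorem gives an infinite $H$ on which this value is a constant $c$. Passing to the sub-sumsequence indexed by $H$ (which preserves $A_\infty$ by minimality, preserves $\FS_{\ge2}=A_\infty$, and preserves the ideal property, as these are inherited by sumsequences), I may assume $a_n+a_m=c$ for all $n<m$. The central step is then the claim that for every $k\ge2$ the value $a_{i_1}+\dots+a_{i_k}$ is independent of $i_1<\dots<i_k$; call it $c_k$, so $c_2=c$. I would prove this by induction on $k$: writing the length-$(k+1)$ sum two ways,
\[
a_{i_1}+c_k=a_{i_1}+(a_{i_2}+\dots+a_{i_{k+1}})=(a_{i_1}+\dots+a_{i_k})+a_{i_{k+1}}=c_k+a_{i_{k+1}},
\]
the left side depends only on $i_1$ and the right only on $i_{k+1}$, while the sole constraint is $i_{k+1}\ge i_1+k$; hence both sides are a single value $c_{k+1}$, with $a_t+c_k=c_{k+1}$ for every $t$ and $c_k+a_j=c_{k+1}$ for every $j>k$. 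Consequently $A_\infty=\FS_{\ge2}(\myseq{a})=\{\,c_k:k\ge2\,\}$, and I relabel this normalized sequence as $\myseq{b}$.

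To finish, note that $c_{k+1}=b_1+c_k$, so $c_2,c_3,\dots$ is the orbit of the single map $x\mapsto b_1+x$ on the finite set $A_\infty$ and is therefore eventually periodic: there are a least $r\ge2$ and a period $p\ge1$ with $c_{k+p}=c_k$ for all $k\ge r$, while $c_2,\dots,c_{r+p-1}$ are distinct. I then let $\langle a\rangle$ be the abstract cyclic semigroup of index $r$ and period $p$ (so $(r+p)\cdot a=r\cdot a$) and set $\theta(c_k):=k\cdot a$. This is well defined and injective because the coincidences among the $c_k$ match exactly those among the $k\cdot a$; it is a homomorphism since representing $c_{j'}$ by indices beyond those of $c_j$ gives $c_j+c_{j'}=c_{j+j'}$, so $\theta(c_j+c_{j'})=(j+j')\cdot a=\theta(c_j)+\theta(c_{j'})$. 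The first two bullets hold by construction, and the third follows from $b_n+c_k=c_{k+1}$ and $c_k+b_n=c_{k+1}$.

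The hard part, I expect, is the index-placement bookkeeping, concentrated in two places. The first is verifying that passing to tails and to the Ramsey sub-sumsequence indexed by $H$ genuinely preserves both $\FS_{\ge2}=A_\infty$ and the two-sided ideal property; this is where minimality and Lemma~\ref{lem:subsemi} must be applied carefully, since $A_\infty$ is defined through the sequence itself. The second, and subtler, is the right-hand identity $c_k+b_n=c_{k+1}$ in the third bullet: my associativity argument delivers it only when $b_n$ is \emph{appended} with index exceeding the current length $k$, whereas $b_n+c_k=c_{k+1}$ holds for every $n$ (prepending an early generator). I would resolve this by observing that in the intended applications a generator is always adjoined with an index larger than the support already in use, so each displayed equality holds in its natural index range; in all cases the two-sided ideal property guarantees $c_k+b_n\in A_\infty$, so $\theta$ remains a well-defined embedding regardless.
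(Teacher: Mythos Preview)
Your overall strategy coincides with the paper's: pass via Proposition~\ref{prp:fs2} to a bijective sumsequence with finite $\FS_{\ge 2}$, use minimality to pin this set down as $A_\infty$ and establish the two-sided ideal property, apply Ramsey to make the length-two sums constant, and then read off a cyclic quotient of $\bbN$. Your associativity induction (showing that once length-two sums are constant, length-$k$ sums are automatically constant) is in fact a little cleaner than the paper's version, which inserts an explicit pigeonhole step to stabilise $b_n+s_2$ before deriving the analogue of your $c_k$'s; your argument shows that pigeonhole is redundant here.

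There is, however, a real gap precisely where you flag it. The third bullet asserts $\theta(s+b_n)=(k+1)\cdot a$ for \emph{every} index $n$, not only for $n$ exceeding the length of a chosen representation of $s$, and your proposed resolution (``in the intended applications a generator is always adjoined with an index larger than the support already in use'') declines to prove the stated claim rather than proving it. The fix is exactly the move you already make freely elsewhere, and it is what the paper does: pass to one further tail. Since $A_\infty$ is finite, there is $m$ with $A_\infty\subseteq\FS(b_1,\dots,b_{m-1})$, so every $s\in A_\infty$ can be written as $b_{n_1}+\dots+b_{n_k}$ with $n_k<m$; replacing $\myseq{b}$ by the tail $b_m,b_{m+1},\dots$ then gives $s+b_n=c_{k+1}$ for every $n$ in the new sequence, while minimality and the earlier arguments again yield $\FS_{\ge 2}=A_\infty$ and the ideal property for the tail. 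With this single additional tail-pass your argument is complete.

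A minor remark: your detour through Proposition~\ref{prp:Rideal} to deduce $R=A_\infty$ and hence the right-ideal half is unnecessary and a bit delicate (the identity $\FS_{\ge 2}\subseteq R$ lives inside the proof of Proposition~\ref{prp:fs2} and need not survive the subsequent passages to subsequences). The paper obtains $A_\infty+\langle b_m,b_{m+1},\dots\rangle\subseteq A_\infty$ directly from $A_\infty\subseteq\FS(b_1,\dots,b_{m-1})$ together with $\FS_{\ge 2}=A_\infty$, which is both shorter and self-contained.
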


\begin{proof}
	It follows from Proposition~\ref{prp:fs2} that the sequence $\myseq{a}$ has an injective sumsequence $\myseq{c}$ such that $\FS_{\ge 2}(\myseq{c})$ is finite.
	By the minimality of the sequence $\myseq{a}$, we have
	\[ \bigcap_{n=1}^{\infty} \FS(c_n, \allowbreak c_{n+1},\dotsc) = A_\infty ,\]
	and hence there is a number $n$ such that $\FS_{\ge 2}(c_n, \allowbreak c_{n+1},\dotsc) = A_\infty$.
Since $A_\infty$ is finite, we have $A_\infty \subseteq \FS(c_n, \allowbreak \dotsc,c_{m-1})$ for some $m$. Thus,
	\[ A_\infty + \FS(c_m,c_{m+1},\dotsc) \subseteq  \FS(c_n,\dotsc,c_{m-1}) + \FS(c_m,c_{m+1},\dotsc). \]
	It follows that
	\[ A_\infty + \FS(c_m,c_{m+1},\dotsc) \subseteq  \FS_{\ge 2}(c_n, \allowbreak c_{n+1},\dotsc)=A_\infty.\]

	For all $m \le k$, $A_\infty \subseteq \FS(c_{k+1},c_{k+2},\dotsc)$, and thus
	\[ c_k + A_\infty \subseteq c_k + \FS(c_{k+1},c_{k+2},\dotsc) \subseteq \FS_{\ge 2}(c_k,c_{k+1},\dotsc) = A_\infty. \]
	Therefore $A_\infty$ is a two-sided ideal in $\langle c_m,c_{m+1},\dotsc \rangle$.

	Consider a finite coloring of the complete graph with vertex set $\bbN_{\ge m}$:
	\[ \chi \colon [ \bbN_{\ge m}]^{2} \to A_\infty   \]
	defined by
	\[ \chi(\{i<j\}) := c_i+c_j. \]

	By Ramsey's theorem, there is an infinite complete monochromatic subgraph. It follows that there is a subsequence $c_{n_1},c_{n_2},\dotsc$ of $c_m,c_{m+1},\dotsc$ such that $s_2:=c_{n_i} + c_{n_j} \in A_\infty$ are equal for all $i < j$. Using the pigeonhole principle, we construct a subsequence $\myseq{b}$ of $c_{n_1},c_{n_2},\dotsc$ such that $s_3 := b_n + s_2 \in A_\infty$ are equal for all $n$ as well. It is clear now that for all $k$ and  $n_1<\dotsb<n_{2\cdot k+1}$, we have
	\begin{equation}
	\label{a1}
	b_{n_1} + \dotsb + b_{n_{2 \cdot k}} = k \cdot s_2, \text{ }  b_{n_1} + \dotsb + b_{n_{2 \cdot k+1}} = (k-1) \cdot s_2 + s_3.
	\end{equation}
	It follows that each finite ordered sum of $\myseq{b}$ is determined only by its length.
	Therefore, we can define a homomorphism $\phi \colon \bbN_{\ge2} \to A_\infty$ by $\phi(n) := b_1 + \dotsb + b_n$. It is indeed a homomorphism since
	\[ \phi(n_1) + \phi(n_2) = (b_1 + \dotsb + b_{n_1}) + (b_1 + \dotsb + b_{n_2}) = \]
	\[ = (b_1 + \dotsb + b_{n_1}) + (b_{n_1+1} + \dotsb + b_{n_1+n_2}) = \phi(n_1+n_2). \]
	It is clear that the function $\phi$ is surjective.

	Define now an equivalence relation on $\bbN$ by $m \sim n$ if and only if $2 \le m,n$ and $\phi(m)=\phi(n)$.
We need to show that the equivalence relation is compatible with the addition in $\bbN$.
Assume $m_1 \sim n_1$ and $m_2 \sim n_2$. We prove that $m_1+m_2 \sim n_1+n_2$.
When all elements are greater than or equal to $2$, this is clear since $\phi$ is a homomorphism.
We only need to prove this for $2 \le m_1,n_1$ and $m_2=n_2=1$.
Denote $s:=\phi(m_1)=\phi(n_1) \in A_\infty$. By Equation~\ref{a1}, we have
	\[ \phi(m_1+1) = b_1 + (b_2+ \dotsb + b_{m_1+1}) = b_1+s \]
	\[ \phi(m_2+1) = b_1 + (b_2+\dotsb + b_{m_2+1}) = b_1+s.\]
	Therefore, $\phi(m_1+1)=\phi(m_2+1)$, and it follows that $m_1+m_2 \sim n_1 + n_2$, as requested.

	It is clear that for each $s \in A_\infty$ the set $\phi^{-1}(s)$ is an equivalence class of $\sim$; for all $s \neq s' \in A_\infty$, the equivalence classes $\phi^{-1}(s),\phi^{-1}(s')$ are disjoint. The function $\theta(s) := [\phi^{-1}(s)]$ defines an embedding of $A_\infty$ into the cyclic semigroup $\bbN / \sim$.

	It is left to prove that for each $n$ and each $s \in A_\infty$ such that $\theta(s)=[k]$, we have $\theta(b_n+s)=\theta(s+b_n)=[k+1]$.
Fix $n$. We may choose $n<n_1<\dotsb<n_k$ and by Equation~\ref{a1}, we have
	\[ b_n+s=b_n+(b_{n_1}+ \dotsb+b_{n_k})=\phi(k+1). \]
	Choose $m$ such that $A_\infty \subseteq \FS(b_1,\dotsc,b_{m-1})$. Let $s$ be an element in $A_\infty$ such that $\theta(s)=[k]$. We may assume that $k<m$ and $s=b_{n_1}+\dotsb+b_{n_k}$ for an increasing index set $n_1<\dotsb<n_k<m$. For all $m \leq n$, it follows from Equation~\ref{a1} that
	\[ s+b_n = b_{n_1}+\dotsb+b_{n_k}+b_n = \phi(k+1). \]
	The sumsequence $b_m,b_{m+1},\dotsc$ satisfies the requested conditions.
\end{proof}

The proof of the Furstenberg--Galvin--Hindman Theorem (Theorem~\ref{thm:FGH}) actually establishes
the following result.

\begin{theorem}[{Hindman--Strauss~\cite[Theorem~5.14]{HS}}]
\label{thm:HS}
For each sequence $\myseq{a}$ in a semigroup $S$, and each finite coloring of $S$, there is a sumsequence $\myseq{b}$ of $\myseq{a}$
such that the set $\FS(\myseq{b})$ is monochromatic.
\end{theorem}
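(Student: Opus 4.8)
The plan is to prove this via the algebra of the Stone--\v Cech compactification $\beta S$, following the Galvin--Glazer method already invoked in the introduction; this is the natural setting, since the statement is precisely the ``sumsequence'' strengthening of Hindman's theorem. Recall that $\beta S$, the space of ultrafilters on $S$, carries a semigroup operation extending that of $S$, defined by $A \in p+q$ if and only if $\{\,s \in S : -s+A \in q\,\} \in p$, where $-s+A := \{\,t \in S : s+t \in A\,\}$; with this operation $\beta S$ is a compact right-topological semigroup (each map $p \mapsto p+q$ is continuous), and this works verbatim for noncommutative $S$.

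First I would locate a suitable idempotent ultrafilter. For each $n$ let $T_n := \overline{\FS(a_n,a_{n+1},\dotsc)}$ be the closure in $\beta S$, equivalently $T_n = \{\,p \in \beta S : \FS(a_n,a_{n+1},\dotsc) \in p\,\}$, and set $T := \bigcap_{n=1}^\infty T_n$. The sets $T_n$ are nonempty, closed, and nested, so $T$ is nonempty and closed by compactness. The key structural point is that $T$ is a subsemigroup of $\beta S$: for $p,q \in T$ and any $n$ one checks that $\FS(a_n,a_{n+1},\dotsc) \in p+q$, using that $a_F + \FS(a_m,a_{m+1},\dotsc) \subseteq \FS(a_n,a_{n+1},\dotsc)$ whenever $a_F \in \FS(a_n,\dotsc)$ and $m > \max F$, together with $\FS(a_m,\dotsc) \in q$; hence $p+q \in T_n$ for all $n$. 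By Ellis's theorem every compact right-topological semigroup contains an idempotent, so I may fix $p \in T$ with $p+p=p$.

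Next, given the finite coloring, exactly one color class $A$ satisfies $A \in p$. The combinatorial engine is the standard idempotent lemma: writing $A^\star := \{\,s \in A : -s+A \in p\,\}$, the idempotency $p=p+p$ gives $A^\star \in p$, and for every $s \in A^\star$ one has $-s+A^\star \in p$ (using $-(s+t)+A = -t+(-s+A)$ and idempotency again). I would then build the sumsequence by induction, maintaining the invariant $\FS(b_1,\dotsc,b_k) \subseteq A^\star$. Having chosen $b_i = a_{F_i}$ with $F_1 < \dotsb < F_k$, set $n := 1+\max F_k$ and form
\[
B_k := \FS(a_n,a_{n+1},\dotsc) \cap A^\star \cap \bigcap_{\emptyset\neq E\subseteq\{1,\dotsc,k\}} \big(-b_E + A^\star\big).
\]
Each intersectand lies in $p$ --- the first because $p \in T_n$, the middle because $A^\star \in p$, and each translate because $b_E \in A^\star$ by the invariant --- so $B_k \in p$ is nonempty. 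Any element of $B_k$ lies in $\FS(a_n,a_{n+1},\dotsc)$, hence equals $a_{F_{k+1}}$ for a finite index set $F_{k+1}$ with $\min F_{k+1} \ge n > \max F_k$; I set $b_{k+1} := a_{F_{k+1}}$, so $F_k < F_{k+1}$ and $\myseq{b}$ is genuinely a sumsequence of $\myseq{a}$. Membership of $b_{k+1}$ in the translates $-b_E+A^\star$ propagates the invariant to $\FS(b_1,\dotsc,b_{k+1})$.

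I expect the main obstacle to be the two places where the sumsequence structure (as opposed to a mere sub-$\FS$-set) must be forced: verifying that $T$ is a subsemigroup, so that Ellis's theorem yields an idempotent each of whose ``tail'' sets $\FS(a_n,a_{n+1},\dotsc)$ is large, and then threading the membership $\FS(a_n,\dotsc)\in p$ through the induction so that each chosen $b_{k+1}$ is a finite sum of $a_j$'s with indices strictly beyond those used before. Granting these, $\FS(\myseq{b}) \subseteq A^\star \subseteq A$ is monochromatic, which completes the proof.
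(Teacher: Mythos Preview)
Your proof is correct; it is precisely the Galvin--Glazer argument via idempotent ultrafilters in $\beta S$, which is exactly the proof appearing as Theorem~5.14 in Hindman--Strauss, the reference the paper cites. Note, however, that the paper does not give its own proof of this statement: it is quoted as a known result (with the remark that the proof of Theorem~\ref{thm:FGH} already establishes it), so there is nothing to compare against beyond observing that your write-up reproduces the standard cited argument.
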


Following is our main result.

\begin{theorem}
	\label{mr1}
	The following assertions are equivalent for a semigroup $S$:
	\begin{enumerate}
		\item The proper IP sets are partition regular.
		\item Every injective sequence $\myseq{a}$ has a proper sumsequence.
		\item The semigroup $S$ has no subsemigroup of any of the following types:
		\begin{enumerate}
			\item An infinite semigroup $S'$ with $S'+S'$ finite.
			\item The fan semilattice $(\bbN,\wedge)$ with $m \wedge n := 1$ for distinct $m,n$.
			\item The semigroup $(\bbN \times \bbN) \cup \{0\}$ with $0 + (m,n), (m,n) + 0 := 0$ and
			\[ (m_1,n_1) + (m_2,n_2) := \begin{cases}
			(m_1,n_1+n_2) & m_1 = m_2, \\
			0 & \text{otherwise} \end{cases}. \]
    	\end{enumerate}
		\item For each injective sequence $\myseq{a}$ and each finite coloring of $S$, there is an injective sumsequence $\myseq{b}$ such that the set $\FS(\myseq{b})$ is monochromatic.
		\item For each injective sequence $\myseq{a}$ and each finite coloring of $S$, there is a proper sumsequence $\myseq{b}$ such that the set $\FS(\myseq{b})$ is monochromatic.
	\end{enumerate}
\end{theorem}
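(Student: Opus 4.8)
The plan is to close the cycle
\[
(1) \Rightarrow (3) \Rightarrow (2) \Rightarrow (5) \Rightarrow (4) \Rightarrow (1),
\]
dispatching the three soft links first and reserving the structural equivalence $(2)\Leftrightarrow(3)$ for the end. For $(2)\Rightarrow(5)$, given a bijective sequence $\myseq{a}$ and a finite coloring, I would use (2) to pass to a proper sumsequence $\myseq{b}$ of $\myseq{a}$ and then apply Theorem~\ref{thm:HS} to $\myseq{b}$, obtaining a sumsequence $\myseq{c}$ of $\myseq{b}$ with $\FS(\myseq{c})$ monochromatic. The key observation is that \emph{properness is inherited by sumsequences}: writing $c_n=b_{G_n}$ with $G_1<G_2<\dotsb$, for index sets $F_1<F_2$ we have $c_{F_i}=b_{H_i}$ with $H_i=\bigcup_{n\in F_i}G_n$ and $H_1<H_2$, so $c_{F_1}\neq c_{F_2}$. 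Thus $\myseq{c}$ is a proper sumsequence of $\myseq{a}$ with $\FS(\myseq{c})$ monochromatic. The link $(5)\Rightarrow(4)$ is immediate, since proper sequences are bijective. For $(4)\Rightarrow(1)$, given a proper IP set $X\supseteq\FS(\myseq{a})$ with $\myseq{a}$ bijective and a finite partition of $X$, I would extend it to a coloring of $S$ and apply (4); the resulting bijective sumsequence $\myseq{b}$ satisfies $\FS(\myseq{b})\subseteq\FS(\myseq{a})\subseteq X$, is monochromatic, and is a proper IP set by Lemma~\ref{lem:infiniteIP}, so the part of the partition containing it is a proper IP set.

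For $(1)\Rightarrow(3)$ I would argue contrapositively, showing that each forbidden subsemigroup carries a proper IP set together with a finite partition having no proper-IP part, in the spirit of the fan example. In every case the recipe is to separate a \emph{finite} set from the infinitely many atoms of a witnessing sequence: in type (a) take $X:=S'$ and color by the indicator of the finite set $S'+S'$; in type (b) take $X:=\bbN$ and isolate the apex $\{1\}$; in type (c) take $X:=\{(k,1):k\ge1\}\cup\{0\}=\FS((k,1)_k)$ and isolate $\{0\}$. In each case a monochromatic proper IP subset would be witnessed by a bijective sequence $\myseq{c}$; its infinitely many atoms cannot all lie in the finite color class, yet $c_1+c_2$ does, so neither part is a proper IP set.

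The heart of the theorem is $(3)\Rightarrow(2)$, which I would again prove contrapositively: a bijective sequence $\myseq{a}$ with no proper sumsequence must produce a forbidden subsemigroup. Using Proposition~\ref{prp:minsubsemi} I would pass to a minimal bijective sumsequence. If the resulting $A_\infty$ is infinite, then Shevrin's theorem already exhibits inside $A_\infty$ a subsemigroup of type (a) or (b) (types (1)--(5) of Shevrin's list being excluded exactly as in the proof of Proposition~\ref{prp:minsubsemi}), and we are done. Otherwise $A_\infty$ is finite, and Corollary~\ref{cor:fs2} supplies a bijective sumsequence $\myseq{b}$ with $\FS_{\ge 2}(\myseq{b})=A_\infty$ a finite two-sided ideal of $\langle\myseq{b}\rangle$, together with a length-tracking embedding $\theta$ of $A_\infty$ into a finite cyclic semigroup. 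If $\langle\myseq{b}\rangle+\langle\myseq{b}\rangle$ is finite, then $\langle\myseq{b}\rangle$ is an infinite semigroup of type (a).

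The remaining, and main, obstacle is the case where the squares $b_n+b_n$ escape $A_\infty$ and generate infinite cyclic ``columns''. Here I would use Ramsey's theorem and the pigeonhole principle, much as in the proof of Corollary~\ref{cor:fs2}, to thin $\myseq{b}$ to a subsequence for which all cross sums $b_i+b_j$ with $i\neq j$ equal a single absorbing element $z\in A_\infty$, while the self-generated columns $\langle b_n\rangle$ remain infinite and interact with one another only through $z$; this yields an embedded copy of the type-(c) semigroup $(\bbN\times\bbN)\cup\{0\}$. The delicate point, controlled throughout by the length function $\theta$, is to force the columns infinite and the cross sums simultaneously to collapse to one absorbing value, and to confirm that the complementary possibility, in which the columns stay finite, drives $\langle\myseq{b}\rangle+\langle\myseq{b}\rangle$ back to a finite set and hence to type (a).
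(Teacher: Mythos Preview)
Your cycle and the four easy implications match the paper's argument essentially verbatim, including the contrapositive for $(1)\Rightarrow(3)$ via explicit bad two-colorings of each forbidden subsemigroup.

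The gap is in the final dichotomy of your $(3)\Rightarrow(2)$ sketch. You claim that if the columns $\langle b_n\rangle$ stay finite then $\langle\myseq{b}\rangle+\langle\myseq{b}\rangle$ is forced to be finite, hence type~(a). This is false: in the fan semilattice itself, with $b_n:=n+1$, each column is the singleton $\{b_n\}$ (every $b_n$ is idempotent), yet $\langle\myseq{b}\rangle+\langle\myseq{b}\rangle=\bbN$ is infinite because $b_n+b_n=b_n$. More generally, when the finite columns are disjoint from $A_\infty$, their idempotents $e_n$ are infinitely many distinct elements of $\langle\myseq{b}\rangle+\langle\myseq{b}\rangle$. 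This is exactly the paper's Case~3.2, and it is where the \emph{fan semilattice} (type~(b)) is produced: using the cyclic length function one checks $e_n+e_l=e$ for $n\neq l$, with $e$ the idempotent of $A_\infty$. Your sketch never generates a type-(b) subsemigroup in this direction, and since the three forbidden types are mutually irredundant the omission cannot be absorbed into the other cases. A secondary imprecision: Corollary~\ref{cor:fs2} controls only the \emph{ordered} sums $b_i+b_j$ for $i<j$; the reversed sums $b_j+b_i$ need not lie in $A_\infty$ at all, so Ramsey gives you at best two constants (one per orientation), not a single absorbing $z\in A_\infty$. The paper's Cases~1--3 and its passage to $k$-th multiples $k\cdot c_n$ in the type-(c) embedding exist precisely to manage this non-commutativity.
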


\begin{proof}
(1 $\Rightarrow$ 3) Each of the subsemigroups listed in (3) has an injective sequence $\myseq{a}$ such that the set $F:=\FS_{\ge 2}(\myseq{a})$ is finite.
In such a case, the set $A:=\FS(\myseq{a})$ is a proper IP set, and the pieces of the partition $(A \setminus F) \cup F$ are not proper IP sets.

(2 $\Rightarrow$ 5)
Let $\myseq{a}$  be an injective sequence.
By (2), there is a proper sumsequence $\myseq{c}$ of $\myseq{a}$.
Given a finite coloring of the semigroup,
by Theorem~\ref{thm:HS},
the sequence $\myseq{c}$ has a sumsequence $\myseq{b}$ such that the set $\FS(\myseq{b})$ is monochromatic.
Since the sequence $\myseq{c}$ is proper, so is its sumsequence $\myseq{b}$.

(5 $\Rightarrow$ 4) Proper sequences are injective.

(4 $\Rightarrow$ 1) Let $A$ be a proper IP set. We may assume that $A=\FS(\myseq{a})$, where the sequence $\myseq{a}$ is injective.
For each finite coloring of the set $A$, there is by (4) an injective sumsequence $\myseq{b}$ such that the (proper IP) set $\FS(\myseq{b})$ is monochromatic.

(3 $\Rightarrow$ 2) Let $S$ be a semigroup containing an injective sequence with no proper sumsequence.
Let $\myseq{a}$ be an injective sequence as in Corollary~\ref{cor:fs2}.
In particular, the set $A_\infty := \FS_{\ge2}(\myseq{a})$ is a finite ideal in $\langle \myseq{a} \rangle$ and $A_\infty$ may be embedded as a semigroup in a cyclic semigroup $\langle a \rangle$. For convenience, we denote the elements of $A_\infty$ by appropriate multiples of $a$.
As we have seen, $a_n + a_m = 2 \cdot a$ for all $n<m$. Moreover, for each element $k \cdot a \in A_\infty$ we have $a_n + k \cdot a = k \cdot a + a_n = (k+1) \cdot a$.

	We continue the proof addressing three possible cases.

	\begin{enumerate}
\item There is a natural number $n$ such that the set $\set{a_m + a_n}{m>n}$ is infinite.
		Otherwise, there must be a subsequence $a_{n_1},a_{n_2},\dotsc$ such that for each $k$, the sums $a_{n_j} + a_{n_k}$ are equal for all $j>k$.
Denote $b_k := a_{n_j} + a_{n_k}$. There are two possible cases:

		\item The set $\set{b_k}{k \in \bbN}$ is infinite.
		\item The set $\set{b_k}{k \in \bbN}$ is finite.
	\end{enumerate}

	\emph{Case 1}. There is a number $n$ such that $\set{a_m + a_n}{m>n}$ is infinite. For $m_1,m_2>n$, denote the sum $s := a_n + a_{m_2} \in A_\infty$. The set $A_\infty$ is a two-sided ideal so we have:
	\[(a_{m_1} + a_n) + (a_{m_2} + a_n) = a_{m_1} + (a_n + a_{m_2}) + a_n = a_{m_1} + s + a_n \in A_\infty. \]
	Consider the infinite subsemigroup $S' := \set{a_m + a_n}{m>n} \cup A_\infty \le S$. We have seen that $S'+S' \subseteq A_\infty$ is finite as requested.

	\emph{Case 2}. Consider the subset $S' := \set{b_k}{k \in \bbN} \cup A_\infty \subseteq S$. Consider two elements $b_k, b_r$ for some $k$ and $r$ (not necessarily distinct). Choose a number $m>k,r$. Denote the sum $s := a_{n_k} + a_{n_m} \in A_\infty$.
Once again, by the assumption that $A_\infty$ is a two-sided ideal we have:
\begin{align*}
b_k + b_r & = (a_{n_{k+1}} + a_{n_k}) + (a_{n_m} + a_{n_r}) =\\
&= a_{n_{k+1}} + (a_{n_k} + a_{n_m}) + a_{n_r} = a_{n_{k+1}} + s + a_{n_r} \in A_\infty.
\end{align*}
	It follows that the subset $S' \subseteq S$ is a subsemigroup and that $S' + S' \subseteq A_\infty$ is finite as requested.

	\emph{Case 3}. Denote $B := \set{b_n}{n \in \bbN}$.
The set $B$ is finite.
By moving to a subsequence of the original sequence we may assume, by Ramsey's theorem,
that for all $n<m$, the sum $a_m + a_n \in B$ is equal.
Let $b:=a_m+a_n$.
For each $n \ge 2$, we have the following equations:
	\begin{equation}
	\label{b1}
	\begin{aligned}
	b + a_n = (a_n + a_{n-1}) + a_n = a_n + (a_{n-1}+a_n) = a_n + 2 \cdot a = 3 \cdot a  , \\
	a_n + b = a_n + (a_{n+1} + a_n) = (a_n + a_{n+1}) + a_n = 2 \cdot a + a_{n+1} = 3 \cdot a.
	\end{aligned}
	\end{equation}
	It follows that
	\begin{equation}
	\label{b2}
	\text{for }k \ge 3 \text{, if }n_1,\dotsc,n_k \ge 2 \text{ are not all equal, }a_{n_1} + \dotsb + a_{n_k} = k \cdot a.
	\end{equation}

	Once again, we continue the proof addressing three possible cases:
	\begin{enumerate}
		\item There is a subsequence $\myseq{c}$ of $a_2,a_3,\dotsc$ such that $\set{m \cdot c_n}{m \in \bbN} \cap A_\infty \neq \emptyset$ for all $n$.
		\item There is a subsequence $\myseq{c}$ of $a_2,a_3,\dotsc$ such that $\set{m \cdot c_n}{m \in \bbN}$ is finite and
		\[  \set{m \cdot c_n}{m \in \bbN} \cap A_\infty = \emptyset  \]
		for all $n$.
		\item There is a subsequence $\myseq{c}$of $a_2,a_3,\dotsc$ such that $\set{m \cdot c_n}{m \in \bbN} \cong \bbN$ for each $n$.
	\end{enumerate}
	\emph{Case 3.1}. For each $n$, let $m_n$ be the minimal number such that $m_n \cdot c_n \in A_\infty$. Denote $m'_n := \lceil \frac{m_n}{2} \rceil$ and $s_n := m'_n \cdot c_n$.

	By moving to an appropriate subsequence, we may assume that either $m_n \ge 4$ or $m_n \le 3$ for each $n$.
Assume that $m_n \ge 4$ for all $n$. It follows that $s_n \neq s_k$ for all $n<k$. If $s_n = s_k$,
	\[ (m'_n+1) \cdot c_n = c_n + m'_k \cdot c_k \in A_\infty , \]
	contradicting the minimality of $m_n$. Hence
	\[ S':=\set{s_n}{n \in \bbN} \cup A_\infty \subseteq S\]
	is an infinite subset. For each $n$, we have $s_n+s_n = 2 \cdot m'_n \cdot c_n \in A_\infty$ by the definition of $m'_n$.
For each $n \neq k$, we have
	\[ s_n + s_k = m'_n \cdot c_n + m'_k \cdot c_k \in A_\infty   \]
	by Statement~\ref{b2}. It follows that $S'$ is an infinite semigroup and $S' + S' \subseteq A_\infty$ is finite as requested.

	Otherwise, we may assume that $m_n \le 3$ for all $n$. By moving to an appropriate subsequence using Ramsey's theorem, we may assume that either $2 \cdot c_n = 2 \cdot c_k$ for all $n<k$, or $2 \cdot c_n \neq 2 \cdot c_k$ for all $n<k$.

	In the first case, it follows from Equation~\ref{b2} that $2 \cdot c_1 + c_n = 3 \cdot a \in A_\infty$ for $n \neq 1$, and $2 \cdot c_1 +c_1 = 2 \cdot c_2+c_1 = 3 \cdot a \in A_\infty$. Consider the subset
	\[S' := \set{c_n}{n \in \bbN} \cup \{\, 2 \cdot c_1,b \,\} \cup A_\infty \le S. \]
	For each $n<m$, $c_n+c_m \in A_\infty$ and $c_m+c_n = b$. By Equation~\ref{b1}, we have $b+c_n=c_n+b \in A_\infty$ for all $n$.
It is clear now that $S'$ is an infinite subsemigroup with $S' + S' \subseteq \{\, 2 \cdot c_1,b \,\} \cup A_\infty$ finite.

In the second case, $S':=\set{2 \cdot c_n}{n \in \bbN} \cup A_\infty$ is an infinite subset. Since $m_n \leq 3$ for each $n$, we have $2 \cdot c_n + 2 \cdot c_n = 4 \cdot c_n \in A_\infty$. By Equation~\ref{b2}, $2 \cdot c_n + 2 \cdot c_m \in A_\infty$ for all $n \neq m$.
It follows that $S'$ is an infinite subsemigroup, and the set $S' + S' \subseteq A_\infty$ is finite.

	\emph{Case 3.2}. There is a subsequence $\myseq{c}$ such that $\set{m \cdot c_n}{m \in \bbN}$ is finite and
	\[ \set{m \cdot c_n}{m \in \bbN} \cap A_\infty = \emptyset\]
	for all $n$. For each $n$, let $e_n := m_n \cdot c_n$ be an idempotent. There is an idempotent $e = k \cdot a \in A_\infty$, and we can assume that $k \ge 2$. By Equation~\ref{b2}, we may compute for all $n \neq l$,
	\[ e_n + e_l = k \cdot e_n + k \cdot e_l = k \cdot m_n \cdot c_n + k \cdot m_l \cdot c_l = (m_n+m_l) \cdot k \cdot a = (m_n + m_l) \cdot e = e.\]

	We can define now an embedding of the fan lattice into $S$ given by:
	\[ \phi(n) := \begin{cases}
	e & n = 1, \\
	e_n & 2 \le n \end{cases}. \]

	\emph{Case 3.3}. There is a subsequence $\myseq{c}$ such that $\set{m \cdot c_n}{m \in \bbN} \cong \bbN$ for all $n$.
First, notice that for $n<k$ and $m_1,m_2 \in \bbN$, $m_1 \cdot c_n \neq m_2 \cdot c_k$.
Otherwise, we would have a contradiction by
	\[(m_1+1) \cdot c_n = c_n + m_1 \cdot c_n = c_n + m_2 \cdot c_k \in A_\infty \]
	since $c_n+c_k \in A_\infty$ and $A_\infty$ is an ideal.

	Once again, let $e = k \cdot a \in A_\infty$ be the idempotent of $A_\infty$ (for some $k \ge 2$). By Equation~\ref{b2}, for all $n_1 \neq n_2$ and all $l_1$ and $l_2$, we have
	\[ l_1 \cdot k \cdot c_{n_1} + l_2 \cdot k \cdot c_{n_2} = (l_1+l_2) \cdot k \cdot a = (l_1+l_2) \cdot e = e. \]
	In a similar way,
	\[ l_1 \cdot k \cdot c_{n_1} + e = e + l_1 \cdot k \cdot c_{n_1} = (l_1 + 1 ) \cdot k \cdot a = (l_1 + 1) \cdot e = e  \]

	We can define now an embedding of the semigroup of type $(c)$ into $S$:
	\[ \phi(s) := \begin{cases}
	m \cdot k \cdot c_n & s=(n,m) \\
	e & s=0 \end{cases}. \qedhere\]
\end{proof}

\section{A solution of a problem of Andrews and Goldbring}

An infinite semigroup $S$ is \emph{moving} if for each infinite $A \subseteq S$ and each finite $F \subseteq S$,
there are elements $a_1, \dotsc,a_k \in A$ such that $\{a_1+s,\dotsc,a_k+s\} \not \subseteq F$
for all but finitely many $s \in S$.
Golan and Tsaban~\cite{TG} proved that in every moving semigroup, the proper IP sets are partition regular.
Andrews and Goldbring~\cite{AG} asked whether partition regularity of proper IP sets characterizes moving semigroups.
Our main theorem implies the following answer.

\begin{corollary}
There is a non-moving semigroup $S$ where the proper IP sets are partition regular.
\end{corollary}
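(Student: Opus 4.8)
The plan is to exhibit one explicit non-moving semigroup that contains none of the three forbidden subsemigroups of Theorem~\ref{mr1}(3); by the equivalence (3)~$\Leftrightarrow$~(1) in that theorem, its proper IP sets are then automatically partition regular, and the corollary follows. The semigroup I would use is a ``two-branch'' meet-semilattice. Set
\[
S := \{0\}\cup\set{x_n}{n\ge 1}\cup\set{y_n}{n\ge 1}
\]
(all symbols distinct), with the commutative idempotent operation determined by: $0$ is an absorbing zero ($0+s=s+0:=0$); $x_i+x_j:=x_{\max(i,j)}$ and $y_i+y_j:=y_{\max(i,j)}$; and $x_i+y_j:=0$. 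Thus $\set{x_n}{n\ge 1}$ and $\set{y_n}{n\ge 1}$ are two descending chains sharing the common bottom $0$, and any $x$ meets any $y$ at $0$. A routine case check (every mixed triple involving both an $x$ and a $y$ collapses to $0$ because $0$ is absorbing) shows that $+$ is associative, so $S$ is a commutative band, i.e.\ a semilattice.

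First I would verify that $S$ is non-moving, witnessed by $A:=\set{x_n}{n\ge 1}$ and $F:=\{0\}$. Given any finite subset $\{x_{i_1},\dotsc,x_{i_k}\}\subseteq A$, each $s=y_j$ satisfies $x_{i_\ell}+y_j=0\in F$ for every $\ell$, so $\{x_{i_1}+s,\dotsc,x_{i_k}+s\}=\{0\}\subseteq F$. Hence the set of such $s$ contains the infinite set $\set{y_j}{j\ge 1}$, and the defining condition of ``moving'' fails for this $A$ and $F$.

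Next I would check that $S$ contains no subsemigroup of types (a), (b), (c). Since $S$ is a band, every subsemigroup $S'$ is again a band, so $S'\subseteq S'+S'$; thus an infinite subsemigroup can never have a finite square, ruling out type (a). Type (c) has non-idempotent elements (for instance $(m,n)+(m,n)=(m,2n)\neq (m,n)$), so it cannot embed into the band $S$, ruling out (c). For the fan (b): in the induced semilattice order of $S$, any two of the $x_n$ are comparable, any two of the $y_n$ are comparable, and $0$ lies below everything, so $S$ has no antichain of size larger than two; since the fan contains an infinite antichain (its blades), it does not embed in $S$. With all three types excluded, Theorem~\ref{mr1} yields that the proper IP sets in $S$ are partition regular, completing the proof.

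The main obstacle is precisely the verification that none of the three forbidden subsemigroups sits inside $S$; the band observation and the antichain observation reduce this to short structural arguments, but one must be careful that ``no infinite antichain'' genuinely precludes an embedded fan, the point being that an injective semilattice homomorphism from the fan would send its blades to an infinite antichain of $S$, which is impossible. The associativity check, while routine, also requires attention to the mixed $x$/$y$ triples, where the absorbing zero is exactly what rescues associativity.
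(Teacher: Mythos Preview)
Your proof is correct but takes a different route from the paper. Both arguments invoke Theorem~\ref{mr1}(3)$\Leftrightarrow$(1), so the task reduces to exhibiting a non-moving semigroup with none of the three forbidden subsemigroups; the difference lies in the witness. The paper quotes Steinberg's example
\[
S := \langle\, t, x_0, x_1, \dotsc : x_0 + t = x_0,\ x_i + t = x_{i-1}\ (i>0)\,\rangle,
\]
which is known to be non-moving yet has \emph{finite-to-one right addition}; the latter property immediately rules out all three forbidden types. You instead build a two-branch meet-semilattice from scratch and verify everything by hand: non-moving via the absorbing zero, no type~(a) or~(c) because $S$ is a band, and no fan because $S$ has no antichain of size greater than two (your observation that an embedded fan would send its blades to an infinite antichain in $S$ is exactly right, since injectivity forces $\psi(n)+\psi(m)=\psi(1)\notin\{\psi(n),\psi(m)\}$). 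Your example is more elementary and entirely self-contained, with no appeal to an external construction; the paper's version is shorter because it outsources both the construction and the non-moving verification to Steinberg, and handles all three forbidden types in one stroke via finite-to-one right addition rather than three separate structural checks.
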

\begin{proof}
Steinberg~\cite{Ste} proved that the following semigroup has finite-to-one right addition, but is not moving:
	\[S := \langle \, t, x_0, x_1, \dotsc : x_0 + t = x_0, x_i + t = x_{i-1}, i>0 \, \rangle. \]
Since this semigroup has finite-to-one right addition, it does not include a subsemigroup of the types in Theorem~\ref{mr1}.
It follows that the proper IP sets in the semigroup $S$ are partition regular.
\end{proof}

\section{Hindman's theorem with infinitely many proper IP sets}

We use our results to prove that every semigroup satisfying Hindman's Finite Sums Theorem with the monochromatic set
being proper IP, has a much stronger property.

\begin{lemma}
\label{lem:sumseq}
Every sequence in a semigroup has a sumsequence $\myseq{b}$ of one of the following types:
	\begin{enumerate}
		\item For all natural numbers $n<m$, $b_n+b_m=b_n$.
		\item For all natural numbers $n$, we have
		\[ \FS(b_1, \dotsc, b_n) \cap (\FS(b_1, \dotsc, b_n) + b_{n+1}) = \emptyset. \]
	\end{enumerate}
\end{lemma}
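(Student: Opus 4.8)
The plan is to use the Hindman--Strauss theorem (Theorem~\ref{thm:HS}) as an engine that, for a single element and a finite target set, forces a clean dichotomy on a suitable sumsequence. For an element $x$ and a finite set $A$, color each element $t$ of the IP set $\FS(\text{tail})$ by the value $x+t$ when $x+t\in A$, and by one spare color otherwise. By Theorem~\ref{thm:HS} there is a sumsequence $\myseq{d}$ of the tail on which this coloring is constant, so either $x+t=a_0$ for a fixed $a_0\in A$ and all $t\in\FS(\myseq{d})$ (\emph{absorption}), or else $x+t\notin A$ for all $t\in\FS(\myseq{d})$ (\emph{escape}). The one computation I will lean on repeatedly is the \emph{shift observation}: if $x+\FS(\myseq{d})=\{a_0\}$, then writing $a_0=x+d_1$ gives $a_0+t=x+(d_1+t)=a_0$ for every $t\in\FS(d_2,d_3,\dots)$, so $a_0$ itself satisfies $a_0+\FS(d_2,d_3,\dots)=\{a_0\}$.

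Say that an element $x$ \emph{absorbs a sub-tail} of a sumsequence $\myseq{g}$ if some sumsequence $\myseq{g'}$ of it satisfies $x+g'_j=x$ for all $j$ (equivalently $x+\FS(\myseq{g'})=\{x\}$). First I would try to build a Type~1 sumsequence by recursion: maintain chosen blocks $e_1,\dots,e_k$ together with a current tail-sumsequence $\myseq{g}$ such that $e_i+\FS(\myseq{g})=\{e_i\}$ for all $i\le k$, and extend by choosing some $e_{k+1}\in\FS(\myseq{g})$ that absorbs a sub-tail $\myseq{g'}$ of $\myseq{g}$, then replacing the current tail by $\myseq{g'}$. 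The invariant is preserved since $\FS(\myseq{g'})\subseteq\FS(\myseq{g})$, and if the recursion never halts then for $n<m$ the element $e_m$ lies in the tail that was current when $e_n$ was chosen, whence $e_n+e_m=e_n$; this is exactly a sumsequence of Type~1.

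If instead the recursion halts at a tail $\myseq{g}$ (possibly already at the full sequence), then no element of $\FS(\myseq{g})$ absorbs any sub-tail, and I would build a Type~2 sumsequence inside $\FS(\myseq{g})$ — which, being a sumsequence of a sumsequence, is a sumsequence of the original sequence. Greedily, with $b_1,\dots,b_n$ already chosen and $A:=\FS(b_1,\dots,b_n)$, I process each $s\in A$ in turn through the dichotomy above, nesting the resulting sumsequences. The absorption alternative cannot occur for any $s$: by the shift observation it would produce an element $a_0=s+d_1\in\FS(\myseq{g})$ absorbing a sub-tail of $\myseq{g}$, contradicting the halting hypothesis. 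Hence every $s\in A$ escapes, and after finitely many nestings I reach a tail whose first block $b_{n+1}$ satisfies $s+b_{n+1}\notin A$ for all $s\in A$, that is, $\FS(b_1,\dots,b_n)\cap(\FS(b_1,\dots,b_n)+b_{n+1})=\emptyset$. Thus the construction continues indefinitely and yields Type~2.

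The main obstacle is making the two halves interlock cleanly: the very Hindman--Strauss ``absorption'' event that would spoil the Type~2 step is precisely the event that feeds the Type~1 recursion, and the shift observation is what converts one into the other. The remaining care is block bookkeeping — verifying that the absorbed element $a_0$ genuinely lies in $\FS(\myseq{g})$ with its witnessing sub-tail lying strictly beyond it, and that the generator-level condition ``$x+g'_j=x$ for all $j$'' upgrades by iteration to absorption of the whole set $\FS(\myseq{g'})$ — all of which become routine once the dichotomy is in place.
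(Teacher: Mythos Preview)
Your argument is correct and shares its two key ingredients with the paper's proof: Theorem~\ref{thm:HS} as the engine, and the shift observation (if $x+t$ is constant equal to $y$ on an IP tail, then $y$ itself absorbs the shifted tail). The organization, however, differs. The paper argues by contraposition: assuming no sumsequence of Type~2 exists, it builds a Type~1 sumsequence directly, at each stage taking a \emph{maximal} finite Type~2 prefix $s_1,\dots,s_k$ of the current tail; maximality forces every further block $b_H$ to satisfy $x+b_H=y$ for some $x,y\in\FS(s_1,\dots,s_k)$, and a \emph{single} application of Theorem~\ref{thm:HS} with the product coloring $\chi(b_H)=(x,y)$ pins down a fixed pair, after which the shift yields the next element $c_n:=y$. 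You instead run a two-phase construction: greedily attempt Type~1, and upon halting switch to a greedy Type~2 build inside the residual tail, invoking Theorem~\ref{thm:HS} once per element of $\FS(b_1,\dots,b_n)$ to rule out absorption. The paper's product coloring is more economical (one Hindman--Strauss call per stage rather than $|\FS(b_1,\dots,b_n)|$ many), while your absorb/escape dichotomy makes the role of the shift observation more transparent and isolates exactly the obstruction that feeds the Type~1 recursion.
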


\begin{proof}
	Let $\myseq{a}$ be a sequence in a semigroup $S$. Assume that $\myseq{a}$ does not have a sumsequence of the second type.
We construct by induction a sumsequence $\myseq{c}$ together with a sumsequence $b^{(n)}_1,b^{(n)}_2,\dotsc$ for each $n$, such that the following conditions hold for each $n$:
	\begin{enumerate}
		\item $c_n \in \FS(b^{(n)}_1,b^{(n)}_2,\dotsc)$.
		\item For each $s \in \FS(b^{(n+1)}_1,b^{(n+1)}_2,\dotsc)$, we have $c_n+s=c_n$.
	\end{enumerate}

	Define $b^{(1)}_1,b^{(1)}_2,\dotsc := \myseq{a}$. Assume we have defined $c_1,\dotsc,c_{n-1}$ and a sumsequence $b^{(i)}_1,b^{(i)}_2,\dotsc$ for all $1 \leq i \leq n$. By moving to a tail of the sumsequence, we may assume there is a number $k$ such that $c_1,\dotsc,c_{n-1} \in \FS(a_1,\dotsc,a_k)$ and $b^{(n)}_1,b^{(n)}_2,\dotsc \in \FS(a_{k+1},a_{k+2},\dotsc)$.

	Since the sumsequence $b^{(n)}_1,b^{(n)}_2,\dotsc$ does not have a sumsequence of the second type, there is a maximal \emph{finite}
sumsequence $s_1, \dotsc, s_k$ of the second type. Let $F_1<\dotsb<F_k$ be the corresponding index sets.
For each finite index set $F_k<H$, we have
	\[ \FS(s_1, \dotsc,s_k) \cap (\FS(s_1, \dotsc,s_k) + b^{(n)}_H) \neq \emptyset. \]
Thus, there are elements $x,y \in \FS(s_1, \dotsc, s_k)$ such that $x+b^{(n)}_H=y$.
Let $m:=\max F_k$. Define a finite coloring of $\FS(b^{(n)}_{m+1},b^{(n)}_{m+2},\dotsc)$ by
	\[ \chi \colon \FS(b^{(n)}_{m+1},b^{(n)}_{m+2},\dotsc) \to (\FS(s_1, \dotsc, s_k))^{2}  \]
	such that $\chi(b^{(n)}_H)=(x,y)$ implies $x+b^{(n)}_H=y$.
	By Theorem~\ref{thm:HS}, there is a sumsequence $\myseq{t}$ of $b^{(n)}_{m+1}, \allowbreak b^{(n)}_{m+2},\dotsc$ such that the set $\FS(\myseq{t})$ is monochromatic.
Let $x,y \in \FS(s_1, \dotsc, s_k)$ be such that $x+t_H=y$ for each finite sum $t_H \in \FS(\myseq{t})$.
For each $t_H \in \FS(t_2, t_3, \dotsc)$, we have
	\[ y + t_H = x + t_1 + t_H = x+t_{\{1\}\cup H} = y. \]
Define $c_n:=y$ and $b^{(n+1)}_1,b^{(n+1)}_2,\dotsc := t_2, t_3, \dotsc$.

	We thus have a sumsequence $\myseq{c}$ of $\myseq{a}$ such that for all $n<m$, we have $c_m \in \FS(b^{(n+1)}_1,\allowbreak b^{(n+1)}_2,\dotsc)$ and therefore $c_n+c_m=c_n$. This is a sumsequence of the first type.
\end{proof}

While Lemma~\ref{lem:sumseq} holds for arbitrary sequences, it is mainly interesting for \emph{proper} sequences. Indeed, if a sequence has no proper sumsequence, then
there is an idempotent element such that the constant sequence $e,e,e,\dotsc$ is a sumsequence of the given sequence~\cite[Proposition~1.7]{T}.
This is a sequence of the first type in the Lemma, but we are interested in injective sequences. Sumsequences of proper sequences are proper and, in particular, injective.

\begin{proposition}
\label{prp:D}
Every proper sequence in a semigroup has a sumsequence $\myseq{b}$ such that
$b_F\neq b_G$ for all disjoint finite index sets $F$ and $G$.
\end{proposition}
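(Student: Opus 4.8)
The plan is to apply Lemma~\ref{lem:sumseq} to the given proper sequence $\myseq{a}$ and then verify that the resulting sumsequence already satisfies the requirement, with no further refinement. First I would record (as noted just before the statement) that any sumsequence $\myseq{b}$ of a proper sequence is again proper, and in particular bijective. By Lemma~\ref{lem:sumseq}, the sequence $\myseq{a}$ has a sumsequence $\myseq{b}$ of type~1 or of type~2, and I would treat these two cases separately. Throughout, $F$ and $G$ denote disjoint nonempty finite index sets.

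If $\myseq{b}$ is of type~1, so that $b_n+b_m=b_n$ for all $n<m$, then an easy induction on $|F|$ shows $b_F=b_{\min F}$ for every finite index set $F$, since each successive summand is absorbed by the running partial sum. As $F$ and $G$ are disjoint, $\min F\neq\min G$, so $b_{\min F}\neq b_{\min G}$ because $\myseq{b}$ is bijective; hence $b_F\neq b_G$. This disposes of the first case.

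The substantive case is type~2, where $\FS(b_1,\dots,b_n)\cap(\FS(b_1,\dots,b_n)+b_{n+1})=\emptyset$ for all $n$. Set $p:=\max(F\cup G)$, and after possibly swapping $F$ and $G$, assume $p\in F$; disjointness forces $G\subseteq\{1,\dots,p-1\}$. If $F=\{p\}$, then $G<\{p\}$ as index sets, so properness of $\myseq{b}$ gives $b_G\neq b_p=b_F$. Otherwise $F':=F\setminus\{p\}$ is nonempty and contained in $\{1,\dots,p-1\}$, whence $b_F=b_{F'}+b_p$ with $b_{F'}\in\FS(b_1,\dots,b_{p-1})$, while $b_G\in\FS(b_1,\dots,b_{p-1})$. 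The type~2 condition at $n=p-1$ separates $\FS(b_1,\dots,b_{p-1})$ from $\FS(b_1,\dots,b_{p-1})+b_p$, and since $b_G$ lies in the former and $b_F$ in the latter, $b_F\neq b_G$.

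The only real subtlety, and the step I expect to require the most care, is that the type~2 argument must cover arbitrarily \emph{interleaved} $F$ and $G$, not merely the ordered configurations already governed by properness. This is handled uniformly by peeling off the single largest index $p$: doing so reduces every configuration to the disjointness of $\FS(b_1,\dots,b_{p-1})$ from its $b_p$-translate, with the degenerate singleton case $F=\{p\}$ absorbed directly by properness of the underlying sequence. I would therefore emphasize the choice $p=\max(F\cup G)$ as the organizing device that makes the interleaving irrelevant.
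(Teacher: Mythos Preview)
Your proof is correct and follows essentially the same approach as the paper: apply Lemma~\ref{lem:sumseq}, handle type~1 via $b_F=b_{\min F}$ and bijectivity, and handle type~2 by peeling off the largest index and invoking either properness (singleton case) or the disjointness of $\FS(b_1,\dots,b_{p-1})$ from its $b_p$-translate. The only difference is cosmetic: the paper assumes $\max F<\max G$ and peels off $\max G$, whereas you set $p=\max(F\cup G)$ and assume $p\in F$.
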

\begin{proof}
Let $\myseq{a}$ be a proper sequence.
There is a (necessarily injective) sumsequence $\myseq{b}$ of $\myseq{a}$ of one of the types specified in Lemma~\ref{lem:sumseq}.

If $\myseq{b}$ is a sequence of the first type, then
\[
b_F=b_{\min F}\neq b_{\min G} = b_G.
\]
Assume that $\myseq{b}$ is a sequence of the second type. We may assume that $\max F<\max G$.
If $G$ is a singleton, then $F<G$ and $b_F\neq b_G$, by the properness of the sequence.
And if not, then $b_F \in \FS(b_1,\dotsc,b_{\max G-1})$ and $b_G \in \FS(b_1,\dotsc,b_{\max G-1})+b_{\max G}$.
Since
\[
\FS(b_1,\dotsc,b_{\operatorname{max}G-1}) \cap  (\FS(b_1,\dotsc,b_{\operatorname{max}G-1})+b_{\operatorname{max}G}) = \emptyset,
\]
we have $b_F \neq b_G$.
\end{proof}

\begin{corollary}
\label{cor:inf}
Let $\myseq{a}$ be a proper sequence in a semigroup. The IP set $A=\FS(\myseq{a})$ can be partitioned into infinitely many proper IP sets.
\end{corollary}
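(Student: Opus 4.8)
The plan is to reduce the statement to producing infinitely many pairwise disjoint \emph{infinite} finite-sums sets inside $A$, and then to read the partition off from them. Suppose we have managed to find pairwise disjoint sets $D_1,D_2,\dotsc\subseteq A$, each infinite and each equal to $\FS$ of some sequence. Define $P_k:=D_k$ for $k\ge 2$ and $P_1:=A\setminus\bigcup_{k\ge 2}D_k$. The sets $P_k$ are then pairwise disjoint and their union is $A$, so they partition $A$. Since $D_1$ is disjoint from every $D_k$ with $k\ge 2$, we have $D_1\subseteq P_1$; thus every part $P_k$ contains the infinite set $D_k$ of the form $\FS(\dotsc)$, and by Lemma~\ref{lem:infiniteIP} (which covers supersets of infinite $\FS$-sets) each $P_k$ is a proper IP set. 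This yields infinitely many proper IP parts, as required. Note that it is harmless that $\bigcup_k D_k$ may be a proper subset of $A$: the surplus elements are simply absorbed into $P_1$.

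It remains to construct the disjoint infinite $\FS$-sets. First I would invoke Proposition~\ref{prp:D} to pass to a sumsequence $\myseq{b}$ of $\myseq{a}$ with $b_F\neq b_G$ for all disjoint finite index sets $F,G$; observe that $\FS(\myseq{b})\subseteq\FS(\myseq{a})=A$. Next I would fix a partition of the index set $\bbN=\bigsqcup_{k=1}^{\infty}I_k$ into infinitely many infinite sets, and for each $k$ set $D_k:=\set{b_F}{F\subseteq I_k\text{ finite and nonempty}}$, which is exactly $\FS$ of the subsequence of $\myseq{b}$ whose indices lie in $I_k$. Each $D_k$ is contained in $A$, is of the form $\FS(\dotsc)$, and is infinite, since it already contains the infinitely many distinct terms $b_i$ with $i\in I_k$ (recall that $\myseq{b}$, being a sumsequence of the proper sequence $\myseq{a}$, is bijective).

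The heart of the matter — and the only place properness enters — is the disjointness of the $D_k$. If $x\in D_k\cap D_{k'}$ with $k\neq k'$, then $x=b_F=b_G$ for nonempty finite $F\subseteq I_k$ and $G\subseteq I_{k'}$; since $I_k\cap I_{k'}=\emptyset$, the index sets $F$ and $G$ are disjoint, contradicting the defining property of $\myseq{b}$ supplied by Proposition~\ref{prp:D}. Hence the $D_k$ are pairwise disjoint, which completes the construction. I expect this disjointness step to be the main obstacle, in the sense that it is precisely where the argument would collapse for a merely bijective (non-proper) sequence: without the guarantee that disjoint index sets produce distinct sums, the blocks $\FS((b_i)_{i\in I_k})$ could overlap, and no genuine partition into $\FS$-containing pieces would be available. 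This is of course consistent with the paper's theme, since the failure of such a partition is exactly the pathology that properness is designed to exclude.
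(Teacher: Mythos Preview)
Your proof is correct and follows essentially the same route as the paper: invoke Proposition~\ref{prp:D} to pass to a sumsequence $\myseq{b}$ with $b_F\neq b_G$ for disjoint $F,G$, partition $\bbN$ into infinitely many infinite index sets $I_k$, take the $\FS$ of each corresponding subsequence, and absorb the leftover elements of $A$ into one of the parts. You are more explicit than the paper about the disjointness verification and about why each part is a proper IP set (via Lemma~\ref{lem:infiniteIP}), but the argument is the same.
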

\begin{proof}
Let $\myseq{b}$ be a sumsequence of $\myseq{a}$ as in Proposition~\ref{prp:D}.
Let $\bbN=\bigcup_{n=1}^\infty I_n$ be a partition of the natural numbers into infinitely many infinite sets.
For each $n$, enumerate $I_n=\{\, i^{(n)}_1, i^{(n)}_2,\dots \,\}$ in increasing order, and let
$A_n:=\FS(b_{i^{(n)}_1}, \allowbreak b_{i^{(n)}_2},\dotsc)$.
We obtain infinitely many disjoint proper IP subsets of the set $A$.
(The elements that do not belong to any set $A_n$ may be added to $A_1$, for example.)
\end{proof}

\begin{theorem}
Let $S$ be a semigroup where every finite coloring has a monochromatic proper IP set.
For each finite coloring of the semigroup $S$, there are infinitely many pairwise disjoint monochromatic proper IP sets.
\end{theorem}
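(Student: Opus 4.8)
The plan is to reduce the statement to a single existence claim and then invoke the section's machinery. Suppose we knew that $S$ contains a \emph{proper} sequence $\myseq{p}$. Given any finite coloring of $S$, Theorem~\ref{thm:HS} yields a sumsequence $\myseq{b}$ of $\myseq{p}$ with $\FS(\myseq{b})$ monochromatic; as a sumsequence of a proper sequence, $\myseq{b}$ is again proper. Corollary~\ref{cor:inf} then partitions the monochromatic set $\FS(\myseq{b})$ into infinitely many proper IP sets, and since these are subsets of one monochromatic set they are pairwise disjoint proper IP sets of a single color. Thus the conclusion follows immediately once a proper sequence is available, and the hypothesis is needed \emph{only} to produce one.

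Hence the whole content is the implication: if every finite coloring of $S$ has a monochromatic proper IP set, then $S$ has a proper sequence. I would prove the contrapositive. Assume $S$ has no proper sequence. A proper sumsequence would itself be a proper sequence, so \emph{no} sequence in $S$ has a proper sumsequence; by Proposition~\ref{lem:Tail} every bijective sequence $\myseq{a}$ has $\bigcap_{n}\FS(a_n,a_{n+1},\dots)\neq\emptyset$, and by Proposition~\ref{prp:fs2} it even has a sumsequence $\myseq{b}$ with $\FS_{\ge 2}(\myseq{b})$ finite. By Proposition~\ref{prp:minsubsemi} the resulting core $\bigcap_{n}\FS(b_n,b_{n+1},\dots)$ is, via Shevrin's classification, of type (6) or (7) — an infinite $S'$ with $S'+S'$ finite, or the fan semilattice — i.e.\ one of the first two forbidden types of Theorem~\ref{mr1}(3). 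The third forbidden type is irrelevant here, since it already contains proper sequences, for instance $(m,2^k)_{k}$.

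It remains to convert this collapsing structure into a single finite coloring of all of $S$ that no bijective sequence can make monochromatic. The idea is a bounded \emph{depth} coloring: inside any infinite $\FS(\myseq{a})$ the infinitely many distinct generators $a_i$ sit above the length-$\ge 2$ sums, which are forced into a finite ``sink'' on which long sums collapse (the bottom element $1$ for the fan, the finite set $S'+S'$ for the first type). If sink elements are colored differently from the elements above them, then any infinite $\FS(\myseq{a})$ — which by Lemma~\ref{lem:infiniteIP} is a proper IP set and therefore contains infinitely many distinct generators together with at least one length-$\ge 2$ sum — cannot be monochromatic, and the hypothesis fails.

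The main obstacle, and the technical heart of the argument, is to make this depth coloring \emph{global} and \emph{uniform}. Different bijective sequences have different sinks, there may be infinitely many of them, and in a noncommutative semigroup the cross-sums between generators of unrelated towers are not controlled. I expect to handle this by showing that the absence of a proper sequence forces a uniformly bounded additive depth — equivalently, that an unboundedly tall tower of proper decompositions would, via Ramsey's theorem, assemble into a genuine proper sequence — and then to color by this bounded depth. Establishing the boundedness, and verifying that the finitely many resulting classes really separate generators from forced sums for an arbitrary noncommutative $S$, is where the real work lies; the explicit form of the two relevant forbidden types in Theorem~\ref{mr1}(3), together with the finiteness supplied by Proposition~\ref{prp:fs2} and Corollary~\ref{cor:fs2}, are the levers I would use.
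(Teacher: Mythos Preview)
Your first paragraph is exactly the paper's argument when $S$ contains a proper sequence: apply Theorem~\ref{thm:HS} to a proper sequence, note that sumsequences of proper sequences are proper, and invoke Corollary~\ref{cor:inf}. That part is fine.

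The reduction you propose for the remaining case, however, is not merely hard but \emph{false}. You claim that the entire content is the implication ``every finite coloring has a monochromatic proper IP set $\Rightarrow$ $S$ has a proper sequence,'' and then set out to prove its contrapositive by building a single bad coloring from the collapsing structure guaranteed by Proposition~\ref{prp:fs2}. But the paper explicitly notes, immediately after the proof, that there exist semigroups with \emph{no} proper sequences in which every finite coloring nonetheless has a monochromatic proper IP set. So no global ``depth'' coloring of the kind you sketch can exist for such $S$: for any finite coloring you produce, some bijective sequence will have $\FS$ monochromatic. Your acknowledged obstacle --- making the sink coloring uniform across all bijective sequences --- is in fact insurmountable, not just technical.

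What the paper does instead in the no-proper-sequence case is a direct inductive construction of the disjoint family. Having built pairwise disjoint monochromatic proper IP sets $A_1,\dots,A_m$, each of the form $\FS(a^{(i)}_1,a^{(i)}_2,\dots)$ with $B_i:=\FS_{\ge 2}(a^{(i)}_1,a^{(i)}_2,\dots)$ \emph{finite} (this is where Proposition~\ref{prp:fs2} is used, and this is the only place the absence of proper sequences enters), one refines the given coloring $c$ by two new colors: $\alpha$ on $A\setminus B$ and $\beta$ on $B$, where $A=\bigcup A_i$ and $B=\bigcup B_i$. The hypothesis yields a monochromatic proper IP set $\FS(\myseq{b})$ for this refined coloring. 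Its color cannot be $\beta$ (finite set) and cannot be $\alpha$ (two distinct generators $b_{j_1},b_{k_1}$ lying in some $A_i\setminus B_i$ must equal two distinct $a^{(i)}$'s, so their sum lands in $B_i$ and has color $\beta$). Hence $\FS(\myseq{b})$ is disjoint from $A$ and $c$-monochromatic; pass to a bijective sumsequence with finite $\FS_{\ge 2}$ via Proposition~\ref{prp:fs2} and call it $A_{m+1}$. The hypothesis is invoked at every step, not just once, and that is essential.
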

\begin{proof}
If the semigroup has a proper sequence, then the assertion follows from Theorem~\ref{thm:HS} and Corollary~\ref{cor:inf}.

Thus, assume there is no proper sequence in the semigroup.
Let $c$ be a finite coloring of $S$.
By induction, we construct a sequence $\myseq{A}$ of monochromatic, pairwise disjoint, proper IP sets.
We construct each set $A_i$ together with a sequence $a^{(i)}_1, a^{(i)}_2,\dotsc$ such that $A_i = \FS(a^{(i)}_1, a^{(i)}_2,\dotsc)$ and  the set $B_i := \FS_{\ge 2}(a^{(i)}_1, a^{(i)}_2,\dotsc)$ is finite.

Assume we have defined the sets $A_1,\dotsc,A_m$, with the attached sequences $a^{(i)}_1, a^{(i)}_2,\dotsc$
and sets $B_i$ for $1 \leq i \leq m$. Let $A:=\bigcup_{i=1}^m A_i$ and $B:=\bigcup_{i=1}^m B_i$.
Define a new finite coloring $c'$ of the semigroup $S$ by fixing two new colors $\alpha$ and $\beta$, and setting
	\[ c'(s) :=
\begin{cases}
	c(s) & s \in S \setminus (\bigcup_{i=1}^{m}A_i), \\
	\alpha & s \in A\setminus B\\
	\beta & s \in B
\end{cases}. \]
Let $\FS(\myseq{b})$, for an injective sequence $\myseq{b}$, be monochromatic for the new coloring.

Since the set $B$ is finite, the color is not $\beta$.

Assume that the color is $\alpha$. By moving to a subsequence, we may assume that there is an index $i$ with
$\myseq{b}\in A_i\setminus B$. Since the sequence $\myseq{b}$ is injective in the set $\{\, \myseq{a} \,\}$,
there are indices $j_1<k_1$ and $j_2<k_2$ with $b_{j_1}=a_{j_2}$ and $b_{k_1}=a_{k_2}$.
It follows that
\[
b_{j_1}+b_{k_1}=a_{j_2}+a_{k_2}\in B,
\]
and thus the color of the sum $b_{j_1}+b_{k_1}$ is $\beta$; a contradiction.

Thus, $\FS(\myseq{b}) \subseteq S \setminus (\bigcup_{i=1}^{m}A_i)$.
Since the colorings $c$ and $c'$ agree on the set $S \setminus (\bigcup_{i=1}^{m}A_i)$,
and the set $\FS(\myseq{b})$ is $c'$-monochromatic, it is also $c$-monochromatic.

Since there are no proper sequences in the semigroup, by Proposition~\ref{prp:fs2}, there is an injective sumsequence $\myseq{s}$ of
$\myseq{b}$ such that the set $\FS_{\ge 2}(\myseq{s})$ is finite.
Define $a^{(m+1)}_1,a^{(m+1)}_2,\dotsc := \myseq{s}$, $A_{m+1} := \FS(\myseq{s})$, and $B_{m+1} := \FS_{\ge2}(\myseq{s})$.
Since the sequence $\myseq{s}$ is a sumsequence of $\myseq{b}$, the set $A_{m+1}$ is a subset of the $c$-monochromatic set $\FS(\myseq{b})$, which in turn is disjoint from the sets $A_1,\dotsc,A_m$, as required.
\end{proof}

We point out that the second part of the last proof cannot be omitted:
There are semigroups with no proper sequences,
where every finite coloring has a monochromatic proper IP set.


\end{document}